\documentclass[11pt, reqno, twoside, makeidx]{amsart}
\usepackage[margin=2.7cm, marginpar=1cm]{geometry}

\usepackage{wrapfig}
\usepackage{marginnote}
\usepackage{hyperref}
\usepackage[english]{babel}
\usepackage[utf8]{inputenc}
\usepackage[T1]{fontenc}
\usepackage{amsmath,amsthm,amssymb,amsfonts}
\numberwithin{equation}{section}
\usepackage{enumerate}
\usepackage{enumitem}
\usepackage{faktor}
\usepackage{dsfont}
\usepackage{nicematrix}
\usepackage{scalerel,stackengine}
\usepackage{textcomp}
\usepackage{graphicx}
\usepackage{extarrows}
\usepackage{epigraph}
\usepackage{graphicx}
\usepackage{subcaption}
\usepackage{sidecap}
\usepackage{indentfirst}
\usepackage{tikz}
\usepackage{tikz-cd}
\usepackage{tkz-euclide}
\usetikzlibrary{shapes.misc,arrows,matrix,patterns,decorations.markings,positioning}
\tikzset{cross/.style={cross out, draw=black, minimum size=2*(#1-\pgflinewidth), inner sep=0pt, outer sep=0pt},
cross/.default={4.5pt}}
\usepackage{pgfplots}
\usepackage{caption}
\usepackage{float}
\usepackage{color}
\usepackage{epstopdf}
\usepackage{epsfig}
\usepackage{stmaryrd}
\usepackage{color}
\usepackage{geometry}
\usepackage{multicol}
\usepackage{verbatim}
\usepackage{wrapfig}
\usepackage{float}

\DeclareMathOperator{\tb}{tb}

\DeclareMathOperator{\Spin}{Spin}

\DeclareMathOperator{\xist}{\xi_{std}}
\renewcommand{\geq}{\geqslant}
\renewcommand{\leq}{\leqslant} 
\renewcommand{\epsilon}{\varepsilon}
\newcommand{\R}{\mathbb{R}}

\newcommand{\Z}{\mathbb{Z}}
\newcommand{\Q}{\mathbb{Q}}
\newcommand{\F}{\mathbb{F}}

\newcommand{\C}{\mathbb C}

\newcommand{\s}{\mathfrak{s}}

\DeclareFontFamily{U}{mathx}{\hyphenchar\font45}
\DeclareFontShape{U}{mathx}{m}{n}{
      <5> <6> <7> <8> <9> <10>
      <10.95> <12> <14.4> <17.28> <20.74> <24.88>
      mathx10
      }{}
\DeclareSymbolFont{mathx}{U}{mathx}{m}{n}
\DeclareFontSubstitution{U}{mathx}{m}{n}
\DeclareMathAccent{\widecheck}{0}{mathx}{"71}
\DeclareMathAccent{\wideparen}{0}{mathx}{"75}

\newtheorem{teo}{Theorem}[section]
\newtheorem*{teo*}{Theorem}
\newtheorem{lemma}[teo]{Lemma}
\newtheorem{prop}[teo]{Proposition}
\newtheorem*{prop*}{Proposition}

\newtheorem{conj}[teo]{Conjecture}

\newtheorem{remark}[teo]{Remark}

\usepackage{xpatch}
\makeatletter
\xpatchcmd{\@thm}{\thm@headpunct{.}}{\thm@headpunct{}}{}{}
\makeatother

\pgfplotsset{compat=1.18}
\begin{document}
\title[Brieskorn spheres and rational homology ball symplectic fillings]{Brieskorn spheres and rational homology ball \\ symplectic fillings}
\author{Antonio Alfieri, Alberto Cavallo and Irena Matkovi\v c}
\dedicatory{\smaller[1] To Andr\'as I. Stipsicz on the occasion of his 60th birthday}
\subjclass[2020]{57K18, 57K33, 57K43, 32Q35}

\begin{abstract}
 Given a canonically oriented Brieskorn sphere $Y=\Sigma(a_1,...,a_n)$, we confirm some statements conjectured by Gompf. More specifically, we obstruct the existence of rational homology ball symplectic fillings for any contact structure on $-Y$ if $n=3$, and when there is no half convex Giroux torsion for $n>3$. Furthermore, we show that the same result holds for the Milnor fillable structure on $Y$ with the possible exception of $\Sigma(3,4,5),$ $\Sigma(2,5,7)$ and $\Sigma(2,3,6k+1)$ for $k\geq1$. Along the way, we determine every canonically oriented Brieskorn sphere with vanishing correction term carrying at most two fillable structures, up to isotopy. 
\end{abstract}

\maketitle

\thispagestyle{empty}
\vspace{-0.5cm}
\section{Introduction}
A \emph{Stein domain} is a compact complex manifold with a plurisubharmonic exhaustion function, whose pseudo-convex boundary has a naturally induced contact structure. From the point of view of a 3-manifold, contact structures arising this way are called \emph{Stein fillable}. They are necessarily \emph{tight}, and inherit rigidity from the complex structure through relative adjunction inequalities \cite{Eliashberg}. In addition, the topology of Stein fillings has deep connections with the problem of symplectic embeddability, and  with the study of the support genus of contact structures in the sense of Giroux’s correspondence; see \cite{Li-Mak,GGP} and references therein.
 
A 4-manifold with boundary $X$ is called a \emph{homology ball} if it is indistinguishable from the standard $4$-ball when regarded under the lens of singular homology, that is $H_*(W;\Z)\simeq H_*(D^4;\Z)$. Similarly, we say that $X$ is a \emph{rational homology ball} if $H_*(W;\Q)\simeq H_*(D^4;\Q)$. Stein rational homology balls, in particular, are interesting because they provide symplectic manifolds with small Betti numbers. Such examples are in turn used to show the existence of exotic structures on  $\C P^2 \# m \overline{\C P^2}$ for small $m$, as extensively investigated in a series of papers by Park, Stipsicz and Szab\'o, culminating in \cite{PSSz}. Note that due to pseudo-convexity, Stein rational homology balls have simple handle decomposition, only containing $1$- and $2$-handles, and tend to be rare. Many of them can be produced as double branched cover of holomorphic disks properly embedded in $D^4$, as it was shown by Loi and Piergallini \cite{LP}; see \cite{Rama,BS} for different constructions.

In this paper we will concentrate on the 3-dimensional \emph{Brieskorn spheres}, because of their prominent role in the study of complex surface singularities. By definition, the Brieskorn homology sphere $\Sigma(a_1,...,a_n)$ is the link of a complete intersection singularity described as follows. Suppose that the coefficients $a_1,...,a_n$ are pairwise coprime, and choose a collection of complex coefficients $c_{i,j}$ where $1\leq i\leq n$ and $1\leq j\leq n-2$. Then consider the algebraic surface 
\[V(a_1,...,a_n)=\left\{(z_1,...,z_n)\in\C^n\:|\: c_{1,j}z_1^{a_1}+\cdots+c_{n,j}z_n^{a_n}=0, \:j=1,...,n-2\right\}\,.\] For a generic choice of the coefficients, so that the $n\times(n-2)$-matrix $(c_{i,j})$ has non-vanishing maximal minors, this variety has an isolated singularity at the origin, and the Brieskorn sphere arises as \[\Sigma(a_1,...,a_n) = S_\epsilon^{2n-1}(0)\cap V(a_1, \dots, a_n)\] for some small enough positive $\epsilon$.

Beyond their importance in singularity theory, Brieskorn spheres are deeply intertwined with Stein geometry and contact topology. They were central for the advancement of 3-dimensional contact topology: Gompf studied them in his seminal paper \cite{Gompf2}; moreover, they were used to produce examples of manifolds without symplectically fillable structures \cite{Lisca} or any tight contact structures \cite{EH}, pairs of non-isotopic tight structures which are homotopic as plane bundles \cite{LM}, and contact structures that are symplectically but not Stein fillable \cite{G-notStein}. 

We focus on two problems, deciding which Brieskorn spheres bound (smooth) rational homology balls, and determining which of them carry a Stein structure.
Our research in specific was motivated by the work of Gompf, where the following conjecture \cite[Conjecture 3.3]{Gompf} was formulated.

\begin{conj}[Gompf]\label{gompfconj} No non trivial Brieskorn homology sphere, with either orientation, arises as the boundary of a pseudo-convex domain in $\C^2$.
\end{conj}

A counter-example to Conjecture \ref{gompfconj} would necessarily be a Brieskorn sphere $\Sigma(a_1,...,a_n)$ which smoothly embeds in $S^4$. This is a very restrictive assumption, constrained for example by Donaldson's diagonalisation theorem, 
see \cite{AmCP} for a survey about this topic. 

Note that there are many examples of Brieskorn spheres bounding smooth homology balls \cite{Savk}, and even more bounding rational homology balls; these appear in recent papers by Akbulut and Larson \cite{AL}, and \c{S}avk \cite{Savk}. Only the first ones are candidates for having a rational homology ball Stein filling, the argument is purely homological and we briefly describe it below. For this reason none of the Brieskorn spheres in the family $\Sigma(2,3,6k+1)$ with $k$ odd can be the boundary of a pseudo-convex domain in $\C^2$.

\begin{lemma}
 \label{lemma:Stein_ball}
 Any rational homology ball Stein filling $W$ of an integral homology sphere $Y$ is a homology ball, that is $H_1(W; \Z)\simeq H_2(W;\Z)=0$.
\end{lemma}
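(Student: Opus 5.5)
The plan is to use the handle structure of a Stein domain together with the long exact sequence of the pair $(W,Y)$ and duality. By the result of Eliashberg and Gompf recalled in the introduction, $W$ admits a handle decomposition with one $0$-handle and only $1$- and $2$-handles. Two consequences follow. First, $H_3(W;\Z)=H_4(W;\Z)=0$ and $H_2(W;\Z)$ is free, because it is the kernel of the cellular boundary map $C_2\to C_1$ between free abelian groups and there are no $3$-cells. Second, since $W$ is a rational homology ball, $H_2(W;\Q)=0$; a free group with zero rationalisation vanishes, so $H_2(W;\Z)=0$.

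It remains to show $H_1(W;\Z)=0$. By the universal coefficient theorem, $H_1(W;\Z)$ is torsion, since its rank is $b_1=0$. I would then argue as follows.
\begin{enumerate}
\item Lefschetz duality gives $H_3(W,Y;\Z)\cong H^1(W;\Z)\cong \mathrm{Hom}(H_1(W),\Z)=0$, the last equality because $H_1(W)$ is torsion.
\item Lefschetz duality and universal coefficients give $H_2(W,Y;\Z)\cong H^2(W;\Z)\cong \mathrm{Hom}(H_2(W),\Z)\oplus \mathrm{Ext}(H_1(W),\Z)\cong \mathrm{Ext}(H_1(W),\Z)\cong H_1(W;\Z)$, using $H_2(W)=0$ and that $H_1(W)$ is finite.
\item Because $Y$ is an integral homology sphere, $H_1(Y)=H_2(Y)=0$. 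The exact sequence $H_2(Y)\to H_2(W)\to H_2(W,Y)\to H_1(Y)$ then gives $H_2(W,Y)\cong H_2(W)=0$.
\end{enumerate}
Combining steps 2 and 3 gives $H_1(W;\Z)\cong H_2(W,Y;\Z)=0$, which finishes the proof.

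The only step needing any care is the freeness of $H_2(W;\Z)$, which is where the Stein hypothesis, through the absence of $3$-handles, is genuinely used. Without it, a general rational homology ball could have nonzero torsion in $H_2$, and correspondingly in $H_1$. The remaining steps are formal duality bookkeeping.
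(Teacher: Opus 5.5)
Your proof is correct and follows essentially the paper's argument: both use the absence of $3$-handles to get $H_3(W;\Z)=0$ and $H_2(W;\Z)$ free, hence zero for a rational ball, and then combine the exact sequence of $(W,Y)$ with Lefschetz duality and universal coefficients. The only differences are that the paper dualises in a different degree, writing $H_1(W)\cong H_1(W,Y)\cong H^3(W)\cong \mathrm{Hom}(H_3(W),\Z)\oplus\mathrm{Ext}(H_2(W),\Z)=0$ where you go through $H_1(W)\cong\mathrm{Ext}(H_1(W),\Z)\cong H^2(W)\cong H_2(W,Y)\cong H_2(W)=0$ (your step 1 is never used), and that you make explicit the freeness of $H_2(W;\Z)$, which the paper's proof uses only tacitly.
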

\begin{proof} 
 From the exact sequence of the pair $(W,Y)$  we can conclude that $H_1(W;\Z)\simeq H_1(W,Y; \Z)$. By Poincar\' e duality and the universal coefficient theorem on the other hand: 
 \[H_1(W,Y; \Z)\simeq H^3(W;\Z)\simeq \text{Hom}(H_3(W; \Z),\Z) \oplus \text{Ext}(H_2(W; \Z), \Z) \ . \]
 Finally, note that since a Stein domain has the homotopy type of a $2$-complex one has that $H_3(W;\Z)=0$, but also $H_2(W;\Z)=0$ because $W$ is a rational homology ball.  
\end{proof}

\begin{remark}
 We recall that any Seifert fibred space $M(e_0;r_1,...,r_n)$ with vanishing $b_1$ can be presented by a negative-definite $n$-leg star-shaped graph, with central framing $e_0\in\Z$ and other vertices with framing at most $-2$, with exactly one orientation. Every canonically oriented Brieskorn sphere is of this kind, see \cite{Saveliev}. We call these Seifert fibred spaces \emph{negative-definite}, while we call the ones with opposite orientation \emph{indefinite}.   
\end{remark}

Issa and McCoy show \cite[Corollary 3]{ImC-e} that if $\Sigma(a_1,...,a_n)$, with the canonical orientation, bounds a rational homology ball then its \emph{standard graph} above has $e_0=-1$. Later on, they also prove \cite[Theorem 1.1]{ImC-embedding} that in general if a negative-definite Seifert fibred space is smoothly embeddable in $S^4$ then $e_0\geq-\frac{n+1}{2}$; furthermore, equality holds only for the manifold \begin{equation}M\left(-\frac{n+1}{2};1-\frac{1}{a},\frac{1}{a},1-\frac{1}{a},\frac{1}{a},...,1-\frac{1}{a}\right)\hspace{0.5cm}\text{ with }\hspace{0.5cm}a\geq2\hspace{0.5cm}\text{ integer.}\label{eq:Duncan}\end{equation}

We use their results to answer Conjecture \ref{gompfconj} in the affirmative for oppositely oriented Brieskorn spheres in absence of half convex Giroux torsion; indeed, we are able to obstruct the existence of any rational homology ball symplectic filling of $-\Sigma(a_1,...,a_n)$, a much stronger result. Note that, remarkably, bounding a pseudo-convex domain in $\C^2$ is equivalent to having a homology ball Stein filling which smoothly embeds in $\C^2$ from a result of Gompf \cite[Corollary 3.1]{Gompf}.

When $n=3$ the presence of half convex torsion always produces an overtwisted structure; therefore, in this case our obstruction for $-Y$ is complete and strengthens \cite[Theorem 1.14]{EOT}. A half torsion layer always implies that $\xi$ is zero-twisting; by \cite[Proposition 1.7]{CM} this means that $c^+(\xi)$ is vanishing in $HF_\text{red}(Y)$. It is expected \cite{MV} that, for Seifert fibred spaces, half Giroux torsion causes the contact invariant to vanish.

\begin{teo}
 \label{teo:canonical1}
 Let $Y=\Sigma(a_1,a_2,a_3)$ be a canonically oriented Brieskorn sphere; then $-Y$ has no rational homology ball symplectic filling. Furthermore, the same is true for $(Y,\xi)$ when $Y$ is different from $\Sigma(3,4,5),$ $\Sigma(2,5,7)$ and $\Sigma(2,3,6k+1)$ for $k\geq1$, and $\xi$ is either $\xi_{\emph{can}}$, the Milnor fillable structure on $Y$, or its conjugate. 
\end{teo}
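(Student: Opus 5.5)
The proof has two halves: one for $-Y$ and one for $(Y,\xi_{\text{can}})$ together with its conjugate. In both, the idea is that a rational homology ball symplectic filling $W$ of a rational homology sphere, glued to a suitable symplectic cap, yields a closed symplectic manifold that is severely restricted. In particular, $W$ together with any concave cap is governed by McDuff-type classification results, and Heegaard Floer correction terms must vanish.

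\textbf{The orientation $-Y$.} Here I would first note that $Y=\Sigma(a_1,a_2,a_3)$ bounds a rational homology ball only if $e_0=-1$ in its standard graph, by \cite[Corollary 3]{ImC-e}. So we may assume $e_0=-1$.

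Then $-Y$ is an indefinite Seifert fibred space. A tight contact structure on $-Y$ with $n=3$ has no half convex Giroux torsion, since half torsion forces overtwistedness when $n=3$. By the known classification of tight structures on such spaces (Wu, Ghiggini, Lisca–Stipsicz), every tight structure on $-Y$ with $e_0(-Y)=1-3=-2\le -2$ is obtained by Legendrian surgery on the dual star-shaped graph, which has central vertex $e_0'=-2$ and negative legs. Such a structure is Stein fillable by a negative-definite plumbing $P$, and more importantly it admits a concave symplectic cap.

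The cap argument runs as follows. Glue $W$ to the reversed plumbing: the negative-definite plumbing of $Y$ with $e_0=-1$ can be realized as a symplectic cap for $(-Y,\xi)$, because the central sphere has self-intersection $-1$ after reversing, so that after blowing down one obtains a positive sphere. This gives a closed symplectic $X=W\cup C$ containing a configuration of spheres with a $+1$ (or $0$) sphere. By McDuff, $X$ is rational, so $X\cong \C P^2\#k\overline{\C P^2}$ with $b_2^+(X)=1$. On the other hand, since $W$ is a $\Q$-ball, $b_2(X)$ equals the number of plumbing vertices, and the intersection form of $C$ would have to embed in $\langle 1\rangle\oplus k\langle -1\rangle$ with full rank. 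A lattice-embedding count in the style of Lisca and Issa–McCoy then shows this is impossible for three legs. An easier route may be to cap $(-Y,\xi)$ directly with $-P$, so that $W\cup(-P)$ is negative-definite closed; Donaldson then forces the plumbing lattice to embed in the diagonal lattice with codimension zero. Issa–McCoy show this only happens for the family \eqref{eq:Duncan}, which for $n=3$ is not an integral homology sphere unless trivial.

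\textbf{The orientation $Y$ with $\xi_{\text{can}}$ or its conjugate.} Here I would use $d(Y)=0$, which is necessary for bounding a $\Q$-ball, together with the contact invariant. If $W$ is a $\Q$-ball filling, then $c^+(\xi)$ must be the bottom element of the tower in degree $-d(Y)$, with $d_3(\xi)=-\tfrac12$ computed from $c_1^2(W)=0$ and $\sigma=\chi=0$. For $\xi_{\text{can}}$, $d_3$ is computed from the Milnor fiber (or the resolution): $d_3=\tfrac14\left(K^2+|\text{vertices}|\right)-\tfrac12$ in terms of the canonical class of the plumbing. Hence we need $K^2+s=0$ and $d(Y)=0$; the structure must also coincide with the unique one whose invariant sits at the bottom of the tower. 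Using Némethi's lattice cohomology and Karakurt-type computations of $HF^+$ for Brieskorn spheres, I would show that when $d(Y)=0$ and $e_0=-1$, the canonical structure has $c^+$ in $HF_{\text{red}}$ or in the wrong degree, except for the listed $\Sigma(3,4,5)$, $\Sigma(2,5,7)$ and $\Sigma(2,3,6k+1)$.

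\textbf{Main obstacle.} The main obstacle is this last classification step. It requires enumerating the Brieskorn spheres with $e_0=-1$ and $d=0$ carrying few fillable structures, and verifying $K^2+s\neq0$ or $c^+(\xi_{\text{can}})\notin U$-tower bottom. My first attempt would be to compute $K^2+s$ via the Seifert invariants using the formula with Dedekind sums.
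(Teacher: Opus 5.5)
Both halves of your proposal have genuine gaps. For $-Y$, the cap you describe does not exist. The manifold with the right boundary for a cap of $-Y$ (oriented boundary $Y$) is $P_\Gamma$, which is negative-definite and is a convex filling of $(Y,\xi_{\text{can}})$. Reversing it to obtain a $+1$ sphere gives $-P_\Gamma$, whose boundary is $-Y$, so it cannot be glued to $W$. Blowing down inside $P_\Gamma$ preserves $b_2^+=0$, so no positive sphere appears. Indeed, any negative-definite cap glued to a $\Q$-ball would give a closed symplectic manifold with $b_2^+=0$, which is impossible. Your ``easier route'' through Donaldson's theorem is purely smooth, hence blind both to orientation and to the contact structure. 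If $Y$ bounds a $\Q$-ball $W$ then $-Y$ bounds $-W$; for instance $\Sigma(2,3,13)$ bounds a contractible manifold, yet the statement asserts that $-\Sigma(2,3,13)$ has no $\Q$-ball symplectic filling. The smooth lattice input only yields $e_0=-1$ (Issa--McCoy). The family \eqref{eq:Duncan} is the extremal case $e_0=-\frac{n+1}{2}$ for embeddings in $S^4$, not a classification of $\Q$-ball bounding.

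The missing input is contact-topological. The paper uses parity: a $\Q$-ball filling forces $d_3(\eta)=0$. If $\eta$ is negative-twisting, then $c^+(\eta)\neq0$ in $HF_{\text{red}}(Y)$ by Proposition \ref{prop:CM-indefinite}, and for negative-definite Seifert $Y$ this group lives in odd degrees, so $d_3(\eta)$ would be odd. If $\eta$ is zero-twisting (tight with $n=3$, hence without half torsion), Lemma \ref{lemma:half} gives $e_0(\Gamma^*)\geq-1$, contradicting $e_0(\Gamma^*)=-e_0(\Gamma)-3=-2$. Your own setup was one observation away from an elementary version of this. Granting the classification you quote, every tight $\eta$ on $-Y=M(-2;1-r_1,1-r_2,1-r_3)$ is Legendrian surgery on $\Gamma^*$. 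But the Stein domain $P_{\Gamma^*}$ is \emph{not} negative-definite: it has $b_2^+=1$, because $e(-Y)=-e(Y)>0$. Since $c_1^2\equiv\sigma \pmod 8$ for characteristic vectors of a unimodular form, every Stein filling $X$ of an integral homology sphere satisfies $d_3\equiv b_2^+(X)\pmod 2$, in the normalisation where a $\Q$-ball gives $d_3=0$. So $d_3(\eta)$ is odd and cannot vanish.

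For $(Y,\xi_{\text{can}})$ you correctly isolate the necessary conditions $d_3(\xi_{\text{can}})=0=d(Y)$. However, the whole content of the claim, namely why these conditions force $Y$ into the exceptional list, is deferred to ``I would show''. A Dedekind-sum formula for $K^2+s$ plus lattice cohomology gives no mechanism for treating infinitely many Brieskorn spheres. The assertion that $c^+$ must be the bottom of the tower is unjustified and not needed; also $\chi(W)=1$, not $0$.

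The missing idea is a squeeze between two bounds. On one side, $d(Y)$ bounds $d_3$ of every fillable structure from above, because for a negative-definite star-shaped graph $d$ is the maximal Maslov grading of a non-zero homogeneous class in $\widehat{HF}$. On the other side, by Theorem \ref{teo:CM-negative} every fillable structure is induced by a Stein structure on the same blown-down domain $X_\Gamma$. Its first Chern class is a characteristic vector $V$ in the box $|v_i|\leq -q_{ii}+\text{TB}_{\xist}(\mathcal K_i)-1$, with $V_{\text{can}}$ at a corner. Lemma \ref{lemma:EOT} (the inverse of an irreducible negative-definite matrix with non-negative off-diagonal entries has negative entries) shows that $V^TQ_{X_\Gamma}^{-1}V$ is minimised on this box exactly at $\pm V_{\text{can}}$, so $d_3(\xi_{\text{can}})$ is strictly minimal. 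Hence $0=d_3(\xi_{\text{can}})\leq d_3(\xi')\leq d(Y)=0$ forces every fillable $\xi'$ to be $\xi_{\text{can}}$ or $\overline\xi_{\text{can}}$. A single fillable structure occurs only for $\Sigma(2,3,5)$, where $d=2$. Only then does a finite case analysis become possible. Proposition \ref{prop:two}, using $e_0=-1$, the twisting numbers of Theorem \ref{teo:tw} and the inequality $e(Y)<0$, shows that two fillable structures together with $d(Y)=0$ occur only for $\Sigma(3,4,5)$, $\Sigma(2,5,7)$ and $\Sigma(2,3,6k+1)$.
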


It also needs to be remarked that we do not know any example of Brieskorn spheres with $n \geq 4$ exceptional fibres bounding a homology ball. The lack of examples motivated a conjecture of Koll\'ar stated in his influential survey article \cite[Conjecture 20]{Kollar}.

For the Milnor fillable contact structure $\xi_\text{can}$, an obstruction to bound a rational homology ball symplectic filling has already been given, in the case of $(\Sigma(p,q,pqk+1),\xi_\text{can})$ with $(p,q)\neq(2,3)$ and $k\geq1$, by a direct computation of $d_3(\xi_\text{can})$, see \cite[Theorem 1.16]{EOT}. Note that $\Sigma(p,q,pqk+1)\simeq S^3_{-1/k}(T_{p,q})$ where $1<p<q$ coprime. 

We also want to emphasise that, on any negative-definite Seifert fibred space, the only examples of contact structures which are known to admit a rational homology ball symplectic filling are the Milnor fillable structure $\xi_\text{can}$ and its conjugate $\overline\xi_\text{can}$, see \cite{BS,EOT}.

\begin{teo}
 \label{teo:canonical2}
 Let $Y=\Sigma(a_1,...,a_n)$ with $n>3$ be a canonically oriented Brieskorn sphere; then $(-Y,\eta)$ has no rational homology ball symplectic filling for any contact structure $\eta$ without half convex Giroux torsion on $-Y$. The same is true for $(Y,\xi)$ when $\xi$ is either $\xi_{\emph{can}}$ or its conjugate. 
\end{teo}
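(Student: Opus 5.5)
Throughout, let $Y=\Sigma(a_1,\dots,a_n)$ with $n>3$, and argue by contradiction. Suppose $W$ is a rational homology ball symplectic filling of either $(-Y,\eta)$, or of $(Y,\xi)$ with $\xi\in\{\xi_\text{can},\overline\xi_\text{can}\}$. Then $Y$, with one of its orientations, bounds a smooth rational homology ball. Doubling $W$ need not give $S^4$, so I would not use an embedding in $S^4$ directly. Instead I would invoke Issa--McCoy \cite[Corollary 3]{ImC-e}: the standard negative-definite star-shaped graph of $Y$ must then have central weight $e_0=-1$. The rest of the argument should show that $e_0=-1$ is incompatible with the existence of a symplectic rational ball filling when $n>3$.

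\emph{Case $(-Y,\eta)$.} Since $W$ is a symplectic rational homology ball, the closed manifold $X=P\cup_Y W$ is negative definite, where $P$ is the negative-definite plumbing bounded by $Y$. Its intersection lattice therefore embeds into the diagonal lattice $-\mathbb{Z}^N$ by Donaldson's theorem. Alternatively, I would use the contact invariant: $c^+(\eta)$ must be nonzero and satisfy $d_3(\eta)$ constraints. The cleaner route is via Heegaard Floer. A rational ball filling forces $d(Y)=0$ and forces the contact invariant $c^+(\eta)\in HF^+(Y)$ to lie in the bottom of the tower, with the correct grading. For the indefinite orientation $-Y$ with $e_0=-1$, tight structures without half convex Giroux torsion can be classified (after Wu, Ghiggini, Matkovi\v c) as coming from Legendrian surgery on the leg chains. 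Their $d_3$ invariants can be computed from the rotation numbers, giving $d_3(\eta)=\frac{c_1^2-3\sigma-2\chi}{4}+\dots$. I would show that $c_1(J)^2$ of any such Stein cobordism is bounded so that $d_3(\eta)\neq d_3$ of a rational ball filling, which equals $-\tfrac12$ after normalisation. The step where the hypothesis $n>3$ enters is the following: with $n\geq4$ legs and $e_0=-1$, the central vertex surgery coefficient makes the structure zero-twisting. The absence of half convex Giroux torsion then leaves only finitely many candidate structures, each of whose invariants I must check.

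\emph{Case $(Y,\xi_\text{can})$.} Here I would use the Milnor fibre and the resolution. Gluing $W$ to $-$(minimal resolution) gives a negative-definite closed manifold. By \cite[Theorem 1.1]{ImC-embedding}-style lattice arguments, combined with $e_0=-1$ and $n\ge4$, no lattice embedding compatible with $c_1(\xi_\text{can})$ exists. Concretely, I would compute $d_3(\xi_\text{can})=-\frac{K^2+\#V+2}{4}$ from the canonical class $K$ of the resolution graph. A rational ball filling requires $d_3=-\tfrac12$, that is $K^2+\#V=0$. With the adjunction equations $K\cdot v=-v^2-2$ on the star-shaped graph with $e_0=-1$ and $n\ge4$ legs, I expect to show $K^2+\#V>0$ strictly. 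The conjugate structure has the same $d_3$, so the same computation covers it.

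The main obstacle is this inequality on $K^2+\#V$ for general legs. I would try expanding $K$ in the dual basis and using the formula for $K^2$ via the continued fractions of the Seifert invariants. The extra legs (four or more) should contribute enough positivity that the $n=3$ exceptions such as $\Sigma(2,3,6k+1)$ cannot occur.
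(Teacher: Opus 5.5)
\textbf{The case $(-Y,\eta)$ has a genuine gap.} What you write here is a list of possible strategies, not an argument. The one concrete claim, that the central coefficient makes $\eta$ zero-twisting, is unjustified and points the wrong way. By Issa--McCoy, $e_0(Y)=-1$, so the standard graph of $-Y$ has central weight $-e_0(Y)-n=1-n\le -3$. That weight is exactly what allows negative-twisting tight structures (the proof of Lemma~\ref{lemma:half} uses a $-1$-twisting one). What it implies for zero-twisting structures is the opposite of what you suggest. By Lemma~\ref{lemma:half}, a zero-twisting tight structure without half convex Giroux torsion needs central weight $\geq -1$, i.e.\ $n\le 2$. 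So once $\eta$ is known to be zero-twisting there is nothing left to check, and there are no ``finitely many candidates''.

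The real difficulty, which you do not address, is excluding negative-twisting $\eta$. Bounding $c_1^2$ over Legendrian surgery diagrams cannot do this. The plumbing of $-Y$ has $b_2^+=1$, so there is no extremal principle like Lemma~\ref{lemma:EOT}. Nor do you have a result saying every such $\eta$ arises from such a diagram.

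The paper uses parity instead:
\begin{itemize}
\item The rational ball gives $d_3(\eta)=0$, so $c^+(\eta)\in HF^+(Y)$ is a non-zero class of even grading.
\item Since $HF_{\text{red}}(Y)$ is supported in odd degrees (\cite[Corollary 1.4]{OSz-fullpath} together with \cite[Proposition 2.2]{CM-negative}), the even part of $HF^+(Y)$ is exactly the tower $\mathcal T$.
\item Hence $c^+(\eta)=0$ in $HF_{\text{red}}$, so by Proposition~\ref{prop:CM-indefinite} $\eta$ is not negative-twisting.
\end{itemize}
Your remark that $c^+(\eta)$ ``lies in the tower'' is the right input. However, it is not automatic from the filling: it needs the odd-support fact, and then has to be fed into Proposition~\ref{prop:CM-indefinite}. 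This argument works for every $n\geq 3$; the hypothesis $n>3$ is not where the argument lives.

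\textbf{The case $(Y,\xi_{\text{can}})$.} Here you take a different route from the paper. The paper proves via Lemma~\ref{lemma:EOT} that $d_3(\xi_{\text{can}})$ is the strict minimum of $d_3$ over fillable structures. It then squeezes $0=d_3(\xi_{\text{can}})\le d_3(\xi')\le d(Y)=0$ to get exactly two fillable structures, and invokes Proposition~\ref{prop:two}, whose list contains only $n=3$ manifolds.

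Reducing to $K^2+\#V\neq 0$ is legitimate, but the inequality you set out to prove is false, so that step fails as stated. Take $\Sigma(2,3,7,43)$, which has $e_0=-1$ and legs $-2,-3,-7,-43$. Solving $K\cdot v=-v^2-2$ gives central coefficient $-1808$ and $K^2=-1853$, hence $K^2+\#V=-1848$.

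In general, the N\'emethi--Nicolaescu formula gives
$K^2+\#V=\epsilon^2/e+e+5-12\sum_i s(\omega_i,a_i)$, with $e=-1/\prod a_i$ and $\epsilon=2-n+\sum_i 1/a_i$.
When $e_0=-1$ one has $\sum_i 1/a_i<1$, so $\epsilon<3-n\le -1$ for $n\geq 4$. Also $|12\,s(\omega,a)|<a$. Together these give $K^2+\#V<-\prod a_i+5+\sum a_i<0$: the extra legs produce negativity, not positivity.

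With the sign corrected, your route does prove the second half of the theorem. It is more direct than the paper's, since it needs neither $d(Y)=0$ nor the classification of fillable structures. But it is special to $n\geq 4$: for $n=3$, $K^2+\#V=0$ genuinely occurs (e.g.\ $\Sigma(2,3,7)$). That is why the paper needs the finer two-structures argument, which also covers Theorem~\ref{teo:canonical1}.
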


If a contact 3-manifold admits a symplectic filling which is a rational homology ball, then we know from Heegaard Floer theory that it has $d_3$-invariant and correction term $d$ equal to zero, while from \cite[Corollary 3]{ImC-e} we know that if it is Seifert fibred then its negative-definite standard graph has $e_0=-1$. To prove the second part of Theorems \ref{teo:canonical1} and \ref{teo:canonical2} we show first that $d_3(\xi_\text{can})$ is strictly minimal among the $d_3$-invariants of any fillable contact structure on a canonically oriented Brieskorn sphere; since $d$ is an upper bound for the possible $d_3$-invariants, this implies that the filling can exist only when there are at most two fillable contact structures, and then study which manifolds satisfy these assumptions. We proved in \cite[Theorem 1.9]{CM} that there is a unique structure only in the case of $\Sigma(2,3,5)$, but this manifold does not bound a rational homology ball.

We recall our classification of fillable contact structures on negative-definite Seifert fibred spaces from \cite[Theorem 1.1]{CM-negative}.

\begin{teo}[Cavallo-Matkovi\v c]
 \label{teo:CM-negative}
 Suppose that $M(e_0;r_1,\dots,r_n)$ has negative-definite standard graph. Then the symplectically fillable contact structures on $M$ are precisely the Legendrian surgeries on all possible Legendrian realisations of the complete blow-down of the standard graph.
\end{teo}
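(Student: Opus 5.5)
The statement has two halves. First, every Legendrian surgery of the prescribed form gives a fillable structure. Second, every fillable structure arises this way. The plan is to prove the first by an explicit Stein construction, and the second by matching an upper bound on the number of candidate structures, coming from convex surface theory, against a lower bound coming from Stein fillings.

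For existence, start from the negative-definite standard graph of $M(e_0;r_1,\dots,r_n)$. Perform the complete blow-down, i.e. repeatedly blow down $(-1)$-framed vertices, starting from the central one when $e_0=-1$, until no further blow-down is possible. This yields a surgery diagram for $M$ consisting of unknots and linked unknots. The central component may now be a more complicated knot, for instance a torus knot after blowing down. Its framings are controlled by the chain lengths of the legs.

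I would check, by induction on the number of blow-downs, that each framing is at most the maximal Thurston--Bennequin number of the corresponding component minus one. Then every component admits Legendrian realisations, with stabilisations chosen freely. Legendrian surgery on each realisation produces a Stein domain by Eliashberg--Gompf, so its boundary structure is Stein, hence symplectically, fillable.

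Different choices of stabilisations give Stein structures whose first Chern classes differ, because the rotation numbers differ. By Lisca--Mati\'c the induced contact structures are then pairwise non-isotopic, except for the identifications forced by the symmetry of the diagram. This gives a lower bound $N$ on the number of fillable structures, where $N$ is the number of Legendrian realisations counted up to that symmetry.

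For the converse, take a fillable structure $\xi$ on $M$. Decompose $M$ into solid torus neighbourhoods $V_1,\dots,V_n$ of the singular fibres and $\Sigma_{0,n}\times S^1$. Make a Legendrian regular fibre have maximal twisting number $t$.

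The key claim is that fillability, via the non-vanishing of $c^+(\xi)$ and the negative-definiteness, forces $t<0$. More precisely, $t$ is bounded above by a quantity determined by the complete blow-down. The mechanism is that $t\geq0$ (zero twisting) would give a vertical Legendrian with twisting $0$, which would make $c^+$ vanish in the reduced group as in \cite[Proposition 1.7]{CM}. A filling would then yield a definite intersection form contradicting Donaldson-type constraints.

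Once $t$ is pinned down, the boundary slopes of the $V_i$ are fixed. Honda's classification of tight structures on solid tori and on $\Sigma_{0,n}\times S^1$ with these slopes then gives at most $N$ tight structures. This uses the standard counting of continued-fraction sign choices and the removal of non-minimally twisting structures on the pair of pants.

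Comparing the upper bound $N$ with the lower bound $N$ from the Stein construction shows that every fillable structure is one of the Legendrian surgeries. The main obstacle is the twisting bound for fillable structures: excluding zero-twisting and the intermediate twisting values that the blown-down diagram does not see. I would first attack it using the contact invariant together with the Stein filling's Chern class, the $d_3$ computation and the adjunction inequality applied to surfaces built from the central vertex.
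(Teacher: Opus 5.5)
Your architecture is reasonable: a Stein lower bound via Lisca--Mati\'c, a convex-surface upper bound at a fixed twisting number, and then a comparison of counts. The paper only quotes this theorem from \cite{CM-negative}. However, the ingredients it does record from there, namely Theorem \ref{teo:tw} and the $\Spin^c$ distinction of \cite[Proposition 5.1]{CM-negative}, are exactly what your outline leaves unproved. The mechanism you offer for the twisting step also fails, for three reasons.

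\emph{Wrong orientation.} Proposition \ref{prop:CM-indefinite} (i.e.\ \cite[Proposition 1.7]{CM}) is about manifolds with \emph{indefinite} standard graph. For $\xi$ on the negative-definite $M$, the invariant $c^+(\xi)$ lives in $HF^+(-M)$, which is the orientation that result does not address.

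\emph{No contradiction even if it applied.} On $\Sigma(2,3,5)$ (negative-definite $E_8$ graph), $\xi_{\text{can}}$ is Stein fillable although $HF_{\mathrm{red}}(-\Sigma(2,3,5))=0$, so its $c^+$ sits in the tower. A definite filling contradicts no Donaldson-type constraint either, since $X_\Gamma$ is itself a negative-definite Stein filling.

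\emph{Adjunction is too weak.} An arbitrary filling $W$ contains no ``surfaces built from the central vertex''; those live in $X_\Gamma$. A Seifert surface of a regular fibre gives far too little. In $\Sigma(2,3,7)$ the regular fibre is fibred of genus $22$, and its fibre framing exceeds the Seifert framing by $42$. So Bennequin only yields twisting $\leq 1$, while $\text{tw}(\Sigma(2,3,7))=-5$.

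What is actually needed is that every fillable structure is negative-twisting with maximal twisting \emph{exactly} $\text{tw}(M)$. That means excluding zero-twisting, excluding intermediate values $\text{tw}(M)<t<0$, and showing that a vertical Legendrian of twisting $\text{tw}(M)$ always exists. An upper bound ``determined by the blow-down'' does not fix the boundary slopes of the $V_i$, so your count cannot even start.

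Two further points would break the comparison of counts even granting the twisting.

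First, Lisca--Mati\'c distinguishes structures up to isotopy whenever the Chern classes differ, and the convex-surface bound counts isotopy classes. So the lower bound must not be taken up to symmetries of the diagram. For $\Sigma(2,3,7)$ the complete blow-down is a single $(-1)$-framed trefoil. Its two realisations, with rotation $\pm1$, are exchanged by symmetry yet give the distinct structures $\xi_{\text{can}}$ and $\overline\xi_{\text{can}}$.

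Second, the central vertex is blown down, not turned into a torus knot. It is the first vertices of the legs outside $\Gamma'$ that become parallel copies of $T_{d_2,d_1}$, with framing $d_1d_2-c$. Two checks have to be carried out for this cable link:
\begin{enumerate}
\item The existence check that $d_1d_2-c\leq \text{TB}_{\xist}(T_{d_2,d_1})-1=d_1d_2-d_1-d_2-1$. This is where negative-definiteness and maximality of the blow-down enter.
\item The upper bound, which must reproduce the realisations of this cable link rather than a product of continued-fraction sign choices on the pieces.
\end{enumerate}
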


We use Theorem \ref{teo:CM-negative} together with some elementary computations to determine the canonically oriented Brieskorn spheres $Y$ such that $(Y,\xi_\text{can})$ may have a rational homology ball symplectic filling, according to the restrictions above.

\begin{prop}
 \label{prop:two}
 The only canonically oriented Brieskorn spheres which carry two fillable structures, up to isotopy, and have vanishing correction term are $\Sigma(3,4,5),$ $\Sigma(2,5,7)$ and $\Sigma(2,3,6k+1)$ for $k\geq1$.
\end{prop}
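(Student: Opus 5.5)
By Theorem \ref{teo:CM-negative}, the number of fillable structures on $Y=\Sigma(a_1,\dots,a_n)$ up to isotopy equals the number of Legendrian realisations of the complete blow-down of the standard graph. I will therefore first use \cite[Corollary 3]{ImC-e}: vanishing correction term is implied by bounding a rational homology ball, but here we only assume $d(Y)=0$. So I first plan to show directly that $d(Y)=0$ together with the counting restriction forces $e_0=-1$. If $e_0\leq -2$, the central vertex alone admits at least $|e_0|-1\geq 1$ choices of rotation number beyond the first. Combined with any leg vertex of framing $\leq -3$, this already yields more than two realisations. Then $Y$ would be close to $\Sigma(2,3,5)$-type graphs, and these are excluded by $d\neq 0$ (e.g.\ $d(\Sigma(2,3,5))=2$). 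The main tool for the count is that a vertex with framing $-k$ (after blow-downs) contributes $k-1$ rotation-number choices, and the total count is the product over vertices of the blown-down graph.

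With $e_0=-1$, the complete blow-down proceeds by repeatedly blowing down $-1$ vertices. I will analyse the star-shaped graph with legs given by continued fractions of $r_i=1-\beta_i/a_i$ type. Blowing down the central $-1$ raises the framings of its neighbours and joins them. The requirement of exactly two realisations means that, after the complete blow-down, all vertices have framing $-2$ except a single $-3$, or a single vertex of framing $-3$ if the graph collapses to one vertex; a single unknot with $tb=-2$ gives rotation $\pm1$. Uniqueness corresponds to all $-2$ except at most framings $\geq -1$, which is the $\Sigma(2,3,5)$ case from \cite[Theorem 1.9]{CM}, and I will use it as the base.

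Next I will do a case analysis. For $n\geq 4$ with $e_0=-1$, the blow-down leaves at least three legs meeting, and I expect the framings to force at least three realisations; I will check this by a short computation of the first blow-down. For $n=3$, $e_0=-1$ means $\sum 1/a_i$-type conditions. I will split according to whether $a_1=2$, i.e.\ whether a leg is a single $-2$. If there is a $-2$ leg, blowing down produces the surgery description $S^3_{-1/k}(T_{p,q})$-like chains. I then track which $(p,q)$ leave only one non-$(-2)$ vertex of framing $-3$, and I expect to find $(2,3,6k+1)$ and $(2,5,7)$. Otherwise I expect $(3,4,5)$ as the only survivor with small legs. For each candidate with two structures, I will check $d=0$ via the known values: $d(\Sigma(2,3,6k+1))=0$, $d(\Sigma(2,5,7))=0$ and $d(\Sigma(3,4,5))=0$. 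Conversely, I will exclude the remaining two-structure families, such as $\Sigma(2,3,6k-1)$ with $d=2$, by computing $d$ through Némethi's/Ozsváth–Szabó's algorithm for plumbings or through the $-1/k$-surgery formula on torus knots.

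The main obstacle is the exhaustive combinatorics of the blow-down for general $(a_1,a_2,a_3)$ with $e_0=-1$. My first approach is to parametrise the legs by the Riemenschneider dual point diagrams, in which successive blow-downs correspond to shortening dual chains. I will then bound the number of framings $\leq -3$ remaining, showing that beyond the listed families some vertex reaches $-4$ or two vertices reach $-3$, giving at least three realisations.
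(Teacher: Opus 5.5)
Your overall strategy (count Legendrian realisations via Theorem~\ref{teo:CM-negative}, reduce to $e_0=-1$, then do a case analysis) matches the paper's. However, two steps fail as written.

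\textbf{The counting criterion.} It breaks down in exactly the case that produces $\Sigma(2,3,6k+1)$. You treat the complete blow-down as a collection of unknots, so that ``two realisations'' means all framings are $-2$ except a single $-3$. By Theorem~\ref{teo:tw}, the blown-down subgraph $\Gamma'$ (non-empty since $e_0=-1$) has one of three forms:
\begin{enumerate}
\item the centre alone;
\item the centre plus a chain of $-2$'s on one leg;
\item the centre plus two whole legs forming the Seifert fibration of $T_{d_2,d_1}$ with $d_2>1$.
\end{enumerate}
Only the first two give unknots. In the third case the remaining neighbours of the centre become copies of $T_{d_2,d_1}$, whose maximal Thurston--Bennequin number is $d_1d_2-d_1-d_2$. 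The correct condition is that every knot has framing $\mathrm{TB}-1$ except one with $\mathrm{TB}-2$.

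For $\Sigma(2,3,6k+1)$ the blow-down gives a $(-1)$-framed trefoil followed by $(-2)$-framed unknots, and $-1=\mathrm{TB}(T_{2,3})-2$. Your search for ``only one non-$(-2)$ vertex of framing $-3$'' will never find it. The paper handles this case in a few lines. Since $e(Y)<0$, the first vertex of every further leg has framing at most $-d_1d_2-1$, so the torus-knot framing $d_1d_2-d_1-d_2-\epsilon$ must be negative. That leaves only $T_{2,3}$ with $\epsilon=2$, and since only one knot may be stabilised, there is a single further leg.

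More generally, what excludes $n\geq4$ and bounds the leg lengths is negative-definiteness, $e(Y)=e_0+\sum r_i<0$, not the realisation count. For example, a centre $-1$ with neighbours of framings $-3,-3,-3,-4$ has the right framings after one blow-down, and is excluded only because $e>0$. With this constraint the whole case analysis is a handful of explicit inequalities, and no Riemenschneider diagrams are needed.

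\textbf{The reduction to $e_0=-1$.} This does not work either. For $e_0=-2$ the centre contributes a single rotation number, not more. So two configurations give exactly two realisations: $e_0=-2$ with exactly one leg vertex of framing $-3$ and all others $-2$, or $e_0=-3$ with all legs made of $-2$'s. These contain infinitely many Brieskorn spheres, e.g.\ $\Sigma(2,3,6k-1)$ for $k\geq2$, $\Sigma(2,5,9)$ and $\Sigma(2,3,7,41)$. They are not ``close to $\Sigma(2,3,5)$'', and you give no uniform argument that $d\neq0$ for all of them.

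The paper needs no such argument. It applies Issa--McCoy directly in the form $d(Y)=0\Rightarrow e_0=-1$, and no rational homology ball is required for this. Indeed, $Y$ bounds the negative-definite unimodular plumbing. The Ozsv\'ath--Szab\'o inequality $c^2+m\leq 4d(Y)$ for characteristic vectors $c$, combined with Elkies' theorem, then shows that $d(Y)=0$ forces the plumbing lattice to be diagonal. That is exactly the lattice situation Issa and McCoy analyse, so the $e_0\leq-2$ families never need to be examined.
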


Note that from convex surface theory we have that $\xi_\text{can}$ and $\overline\xi_\text{can}$ are actually the only tight structures on each of the Brieskorn spheres in Proposition \ref{prop:two}. 

The strategy to prove the first part of Theorems \ref{teo:canonical1} and \ref{teo:canonical2} requires our classification of all the relevant contact structures on indefinite Seifert fibred spaces whose base orbifold is a sphere \cite{CM}; in particular, we need the following result, see \cite[Proposition 1.7]{CM}.

\begin{prop}[Cavallo-Matkovi\v c]
 \label{prop:CM-indefinite}   
 Suppose that $M=M(e_0;r_1,\dots,r_n)$ has indefinite standard graph. Then $\eta$ is a negative-twisting tight structure on $M$ if and only if $c^+(\eta)\in HF_\emph{red}(-M,\s_\eta)$ is non-vanishing.
\end{prop}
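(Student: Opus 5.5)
The plan is to prove the two implications separately. Throughout I use that $U\cdot c^+(\eta)=0$, so a non-zero $c^+(\eta)$ is either the bottom element of the $U$-tower of $HF^+(-M,\s_\eta)$ or has a component in $HF_\text{red}(-M,\s_\eta)$. I also use the convex-surface description of tight structures on Seifert fibred spaces over $S^2$: such a structure is either zero-twisting, meaning it contains a Legendrian regular fibre with twisting $0$, or negative-twisting. Negative-twisting tight structures are the ones described by a contact surgery diagram on the standard graph, with the central vertex realised as a Legendrian unknot.

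\emph{($\Leftarrow$).} Suppose $c^+(\eta)$ is non-zero in $HF_\text{red}$. Then $c^+(\eta)\neq 0$, so $\eta$ is tight. To show that $\eta$ is negative-twisting, I argue by contraposition. Assume $\eta$ contains a Legendrian regular fibre $F$ with $tw(F)=0$. I would Legendrian-realise the exceptional fibres around $F$ so that $\eta$ becomes a contact $(+1)$-surgery, or a sequence of such, on a structure on a connected sum of lens spaces or on $S^1\times S^2$. For these the reduced group $HF_\text{red}$ vanishes. The contact class is natural under the cobordism maps induced by $(+1)$-surgeries. These maps send the relevant $HF^+$ into the image of $HF^\infty$, so $c^+(\eta)$ lies in the tower and is zero in $HF_\text{red}$. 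This matches the statement already quoted in the introduction, that zero-twisting forces $c^+$ to vanish in $HF_\text{red}$.

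\emph{($\Rightarrow$).} Let $\eta$ be tight and negative-twisting. Because the standard graph of $M$ is indefinite, after a rational blow-down I expect $\eta$ to be given by Legendrian surgery on a Legendrian realisation of a link, consisting of a Legendrian unknot for the central vertex and chains for the legs. I would then produce a Stein cobordism from a simpler manifold $M'$ with a structure $\eta'$ whose contact class is already known to be non-zero in $HF_\text{red}$. For $M'$ I would take a small Seifert space or an $L$-space surgery on a torus knot, where Lisca--Stipsicz computed the contact classes. Naturality of $c^+$ under Legendrian surgery gives $F_W(c^+(\eta))=c^+(\eta')$, where $F_W$ is the map induced by the reversed cobordism.

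The difficulty is that naturality only transfers non-vanishing, not the property of being non-zero in $HF_\text{red}$. To handle this I would combine two facts.
\begin{enumerate}
\item Cobordism maps of negative-definite cobordisms carry the bottom of the tower either to zero or to the bottom of the tower.
\item The reduced part of $c^+(\eta')$ is non-zero.
\end{enumerate}
Together these show that $c^+(\eta)$ cannot be a pure tower element. An alternative is a grading argument: compare $-d_3(\eta)-\tfrac12$ with the $d$-invariant of $(-M,\s_\eta)$. Indefiniteness of the graph should force these two numbers to differ. One could instead use the Ozsváth--Szabó plumbing description of $HF^+(-M)$, in which negative-twisting structures correspond to characteristic vectors outside the bottom of the tower.

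I expect this last step, showing that $c^+(\eta)$ projects non-trivially to $HF_\text{red}$ rather than merely being non-zero, to be the main obstacle. I would attack it first through the plumbing/lattice model of $HF^+$.
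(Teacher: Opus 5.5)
The paper contains no proof of this statement; it is quoted from \cite[Proposition 1.7]{CM}. Judged on its own, your proposal leaves ($\Rightarrow$) unproved.

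The obstacle you single out is not actually one. Every cobordism map $F^+$ is $U$-equivariant and commutes with $\pi\colon HF^\infty\to HF^+$, so it sends the tower $\mathcal T=\mathrm{im}\,\pi$ into the tower. Hence $F^+(c^+(\eta))=c^+(\eta')\notin\mathcal T$ already forces $c^+(\eta)\notin\mathcal T$, and your items (1)--(2) are automatic. What is genuinely missing is the following.
\begin{itemize}
\item[(a)] \emph{A description of all negative-twisting tight structures on $M$.} The one you propose cannot exist in the relevant case. Legendrian surgery on a Legendrian unknot has framing $\tb-1\leq-2$, so a central vertex with $e_0\geq-1$ is never realised this way; that is exactly the case singled out by Lemma \ref{lemma:half}. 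What is needed is a convex-surface upper bound on the number of negative-twisting structures, matched by explicit constructions.
\item[(b)] \emph{An actual reason why $c^+(\eta)\notin\mathcal T$.} Your proposed bases cannot supply it. On an $L$-space, such as the torus-knot surgeries where Lisca--Stipsicz computed $c^+$, one has $HF^+=\mathcal T$, so $c^+(\eta')$ lies in the tower automatically. For a non-$L$-space base, the claim is the very statement being proved.
\end{itemize}
The grading comparison with $d(-M,\s_\eta)$ is only asserted, and it also presupposes $c^+(\eta)\neq0$.

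Once (a) is known, one mechanism that does work is the following. Suppose $\eta$ has a Stein filling $X$ with $b_2^+(X)>0$, for instance a Legendrian realisation of the complete blow-down of the indefinite standard graph, which keeps $b_2^+=1$. Then $F^\infty_{X\setminus B^4,\s}=0$, so $F^+_{X\setminus B^4,\s_J}\colon HF^+(-M)\to HF^+(S^3)$ kills $\mathcal T$. But it does not kill $c^+(\eta)$, so $c^+(\eta)\notin\mathcal T$.

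In ($\Leftarrow$) the principle is right: push $c^+(\eta)$ in from a manifold without reduced homology and use $F^+(\mathcal T)\subset\mathcal T$. The mechanism, however, is not available as stated. For a fibre $F$ with twisting $0$ the contact framing equals the fibre framing, so the filling of $F$ that yields a connected sum of lens spaces would be a contact $0$-surgery. Contact $(+1)$-surgery on $F$ only gives a map \emph{out of} $HF^+(-M)$, which says nothing about $c^+(\eta)$.

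A correct version runs as follows. Do Legendrian surgery on $k$ Legendrian push-offs of $F$, which are parallel with respect to the fibre framing. This yields $(M(e_0+k;r_1,\dots,r_n),\eta_k)$, and the reversed Stein cobordism satisfies $F^+_{\overline W}(c^+(\eta_k))=c^+(\eta)$. For $e_0+k\geq0$ this manifold is an $L$-space: the central vertex of its dual graph has weight $\leq-n$, so there is no bad vertex. Hence $c^+(\eta_k)\in\mathcal T$, and therefore $c^+(\eta)\in\mathcal T$.
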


Using the same strategy, we can extend our obstructions to any indefinite Seifert fibred space, and determine exactly which ones bound a rational homology ball that embeds in $S^4$ and carry a Stein structure.

\begin{teo}
 \label{teo:opposite}
 Let $M=M(e_0;r_1,...,r_n)$ be a negative-definite Seifert fibred space with $n\geq3$; then $-M$ arises as the boundary of a pseudo-convex domain in $\C^2$, without half convex Giroux torsion on the boundary, if and only if one has $M=M(-2;1-\frac{1}{a},\frac{1}{a},1-\frac{1}{a})$ for $a\geq2$ integer. 
\end{teo}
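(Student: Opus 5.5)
The proof splits into two directions. The \emph{only if} direction is an obstruction argument; the \emph{if} direction is an explicit construction.

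For the \emph{only if} direction, suppose $-M$ bounds a pseudo-convex domain $W\subset\C^2$ and the induced contact structure $\eta$ has no half convex Giroux torsion. By Gompf's result \cite[Corollary 3.1]{Gompf}, $W$ is a homology ball Stein filling which smoothly embeds in $\C^2$, hence in $S^4$. So $M$ (with either orientation) embeds in $S^4$. By \cite[Theorem 1.1]{ImC-embedding} this gives $e_0\geq-\frac{n+1}{2}$, with equality only for the family \eqref{eq:Duncan}. We will use Heegaard Floer theory to force equality.

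Since $\eta$ is Stein fillable it is tight, and we claim it is negative-twisting. Because the standard graph of $-M$ is indefinite, the classification of \cite{CM} shows that tight structures on $-M$ are either negative-twisting or zero-twisting. A zero-twisting structure contains a half convex torsion layer, which is excluded by hypothesis. Proposition \ref{prop:CM-indefinite} then gives $c^+(\eta)\neq0$ in $HF_\text{red}(M,\s_\eta)$. On the other hand, $W$ is a rational homology ball filling, so $d(-M)=0$ and $d_3(\eta)$ equals the value forced by $\chi(W)=1,\sigma(W)=0$. The element $c^+(\eta)$ sits in degree $-d_3(\eta)-\tfrac12$.

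The key step, and the main obstacle, is to show that a nonzero reduced class cannot sit in that degree unless $e_0$ is as negative as allowed. The first thing we will try is a Némethi-type computation via lattice cohomology of the negative-definite plumbing for $M$. Here $HF^+(-(-M))=HF^+(M)$ is concentrated in degrees computed from the graph, and $HF_\text{red}$ lives in degrees strictly below $d(M)=0$ when $e_0\geq-\frac{n-1}{2}$, roughly because the tower bottom coincides with the characteristic vector minimizing $(K^2+s)/4$. Alternatively, following the proof strategy of Theorems \ref{teo:canonical1} and \ref{teo:canonical2}, we would show directly that every negative-twisting structure with nonvanishing reduced invariant has $d_3\neq d$ under $e_0>-\frac{n+1}{2}$. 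This would contradict $d_3(\eta)$ being the degree of the bottom of the tower. Either way we conclude $e_0=-\frac{n+1}{2}$, so $M$ is the manifold \eqref{eq:Duncan}. Applying Lemma \ref{lemma:Stein_ball} and the further condition that $W$ be Stein, so that $-M$ carries the right tight structures, we would then rule out $n\geq5$. This should reduce $M$ to $n=3$, giving $M(-2;1-\frac1a,\frac1a,1-\frac1a)$. We expect that for $n\geq5$ the indefinite manifold has too many legs for a negative-twisting structure with $d_3=0$, checked via Theorem \ref{teo:CM-negative}-style enumeration. We also note that $n$ odd is forced by integrality of $e_0$.

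For the \emph{if} direction, $M(-2;1-\frac1a,\frac1a,1-\frac1a)$ is a Seifert space known to bound the rational homology balls from \cite{BS,EOT,LP}. We will exhibit $-M$ as the boundary of a Stein homology ball obtained as the double branched cover of a holomorphic disk in $D^4$, embedded in $\C^2$ à la Gompf. Alternatively we will write an explicit Kirby diagram with one $1$-handle and one Legendrian $2$-handle, realise it in $\C^2$, and check that the induced structure is negative-twisting, so that it has no half torsion.
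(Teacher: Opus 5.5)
Your \emph{only if} argument fails at the claim that $\eta$ is negative-twisting. It is not true that every zero-twisting tight structure on $-M$ contains a half convex torsion layer. Lemma \ref{lemma:half} gives this only when the central framing of the standard graph $\Gamma^*$ of $-M$ is at most $-2$.

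The Floer-theoretic input in fact forces the opposite conclusion. The class $c^+(\eta)\in HF^+(M,\s_\eta)$ has even Maslov grading, since it sits in degree $d(M,\s_\eta)=0$. For a negative-definite Seifert fibred $M$, all even-degree classes of $HF^+(M)$ lie in the tower, so $HF_{\text{red}}(M)$ is supported in odd degrees (\cite[Proposition 2.2]{CM-negative} together with \cite[Corollary 1.4]{OSz-fullpath}). Hence the image of $c^+(\eta)$ in $HF_{\text{red}}$ vanishes for \emph{every} value of $e_0$, and Proposition \ref{prop:CM-indefinite} says $\eta$ is \emph{not} negative-twisting.

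So your ``key step'' aims at a false target. Carried out correctly, it would show that no such $M$ exists at all, contradicting the domains of Figure \ref{Pseudo}, whose boundary structures must therefore be zero-twisting. For the same reason, your plan for the \emph{if} direction cannot work: you propose to check that the filled structure is negative-twisting in order to exclude half torsion, but it is not negative-twisting. A smaller point: Gompf's result and Lemma \ref{lemma:Stein_ball} yield an integral homology ball only for integral homology spheres. Here $|H_1(-M;\Z)|=a^2$, so $W$ is only a rational homology ball.

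The missing idea is to use Lemma \ref{lemma:half} as a bound on $e_0$. Once you know $\eta$ is zero-twisting with no half convex torsion, the lemma says the central framing of $\Gamma^*$, namely $-e_0-n$, is at least $-1$, i.e.\ $e_0\leq 1-n$. Combined with $e_0\geq-\frac{n+1}{2}$ from \cite[Theorem 1.1]{ImC-embedding}, this gives $n\leq 3$. Hence $n=3$ and $e_0=-2=-\frac{n+1}{2}$, and the equality case of \cite[Theorem 1.1]{ImC-embedding} gives exactly $M(-2;1-\frac1a,\frac1a,1-\frac1a)$. This one inequality replaces your unspecified reduction from $n\geq 5$ to $n=3$, which you leave as an expectation with no mechanism. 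In the paper, the \emph{if} direction is just the explicit Stein domain of Figure \ref{Pseudo}: a $1$-handle and a Stein $2$-handle attached along a Legendrian knot with $a$ strands and $\tb=1$, with boundary $-M\simeq S^3_{0,0}(T_{2,2a})$.
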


\subsection*{Acknowledgements} {\smaller[1] We are happy to thank Georgios Dimitroglou Rizell for the many discussions we had, and Marco Golla for finding a mistake in an early version of the paper. We thank the Matematiska institutionen at Uppsala universitet for their friendly hospitality. A.C. has been partially supported by the HORIZON-ERC-2023-ADG 101141468 KnotSurf4d project.}

\section{Rational homology ball symplectic fillings}
We denote by $Q_{X_\Gamma}$ the intersection form of $X_\Gamma$, the negative-definite Stein domain obtained by blowing down the standard graph $\Gamma$ of $Y$. We prove the following linear algebra lemma whose proof is based on \cite[Proposition 4.1]{LySi} and which appears in a weaker form in \cite[Lemma 3.2]{EOT}.

We recall that a symmetric square matrix $A$ of size $m>1$ is \emph{irreducible} when $P^TAP$ is not a block matrix for any permutation matrix $P$. Such a terminology is standard in linear algebra, but we prefer to state it explicitly for the sake of clarification.

\begin{lemma}
 \label{lemma:EOT}
 Consider the quadratic form $F:\R^m\rightarrow \R$ given as $F(V)=V^TAV$ where $A$ is the inverse of an irreducible negative-definite matrix $Q$ such that $q_{ij}\geq0$ for $1\leq i\neq j\leq m$, and let $W$ be a fixed vector in $\R^m$ whose entries are non-negative. If we set \[\mathcal D_W=\{V\in\R^m\:|\:|v_i|\leq w_i\text{ for every }1\leq i\leq m\}\:,\]
then $F\lvert_{\mathcal D_{W}}$ attains its minimum at $W$. Furthermore, if $F(W)=F(V)$ then one has $V=\pm W$. 
\end{lemma}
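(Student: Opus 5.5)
The plan is to show that $A=Q^{-1}$ has all entries strictly negative. The inequality $F(V)\geq F(W)$ will then follow from a triangle-type estimate, and the equality case will come from strictness.

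\textbf{Step 1: the sign of $A$.} Set $M=-Q$. Then $M$ is positive definite, and its off-diagonal entries $-q_{ij}\leq 0$, so $M$ is a symmetric Z-matrix. Hence $M$ is a nonsingular M-matrix, and therefore $M^{-1}\geq 0$ entrywise. To prove this directly, write $M=s(I-B)$ with $s>\max_i m_{ii}$ and $B=I-M/s\geq0$ entrywise. Positive definiteness gives that the eigenvalues of $B$ lie in $(-1,1)$ and $\rho(B)<1$. Then
\[M^{-1}=s^{-1}\sum_{k\geq0}B^k\geq 0.\]
Irreducibility of $Q$ means the graph on $\{1,\dots,m\}$ with an edge $ij$ whenever $q_{ij}\neq0$ (equivalently $b_{ij}>0$) is connected. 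So for each pair $i,j$ some power $B^k$ has $(B^k)_{ij}>0$, which gives $(M^{-1})_{ij}>0$. Consequently $A=-M^{-1}$ has every entry strictly negative. This is the key step. The only subtlety is justifying the Neumann series, which rests on the bound $\rho(B)<1$ coming from definiteness.

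\textbf{Step 2: the minimum.} Take $V\in\mathcal D_W$ and write $F(V)=\sum_{i,j}a_{ij}v_iv_j$. Since $a_{ij}<0$ and $v_iv_j\leq |v_i||v_j|\leq w_iw_j$, each term satisfies
\[a_{ij}v_iv_j\geq a_{ij}|v_i||v_j|\geq a_{ij}w_iw_j.\]
Summing over $i,j$ gives $F(V)\geq F(W)$. Since $W\in\mathcal D_W$, the minimum over $\mathcal D_W$ is attained at $W$.

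\textbf{Step 3: the equality case.} Suppose $F(V)=F(W)$. Then every termwise inequality above is an equality, and because $a_{ij}\neq0$ this forces, for all $i,j$:
\begin{itemize}
\item $v_iv_j=|v_i||v_j|$;
\item $|v_i||v_j|=w_iw_j$.
\end{itemize}
Taking $i=j$ in the second identity gives $|v_i|=w_i$ for every $i$. The first identity says all nonzero entries of $V$ share a common sign. If $W=0$, then $V=0=\pm W$. Otherwise the entries $v_i=\pm w_i$ all carry one sign on the support of $W$ and vanish elsewhere, so $V=\pm W$.

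Everything after Step 1 is elementary. The one point needing care is strict negativity of all entries of $A$, since irreducibility is needed there. Without it the equality case fails: for a block-diagonal $Q$, each block could take an independent sign.
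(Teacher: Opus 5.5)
Your proof is correct and follows the paper's route. Everything rests on $A=Q^{-1}$ being entrywise strictly negative: the paper cites Berman--Plemmons for this, while you derive it from a Neumann series and the connectivity of the graph of $Q$. After that, the paper writes $F(W)-F(V)=(W+V)^TA(W-V)\leq 0$ with $W\pm V\geq 0$, whereas you compare the two sums term by term; in both versions, strict negativity of every $a_{ij}$ forces $V=\pm W$ in the equality case.

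One small slip in Step 1: with only $s>\max_i m_{ii}$, positive definiteness of $M$ shows the eigenvalues of $B$ are below $1$, but not that they are above $-1$. Either also require $2s>\lambda_{\max}(M)$, or note that $|x^TBx|\leq |x|^TB|x|$ for $B\geq 0$, which gives $\lambda_{\min}(B)\geq-\lambda_{\max}(B)>-1$.
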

\begin{proof}
 We have that every entry of $A$ is negative because $Q$ is an irreducible negative-definite $M$-matrix, see \cite[Theorem 6.2.7]{BP}. 
 If $V\in\mathcal D_W$ then $w_i\pm v_i\geq0$ for every $i=1,...,m$. This immediately implies \[0\geq(W+V)^TA(W-V)=F(W)-F(V)\] as $W\pm V$ is a vector with non-negative coordinates, and $A$ has negative entries. Moreover, suppose that $V$ satisfies $F(V)=F(W)$; then we set $a_i=w_i+v_i,\:b_i=w_i-v_i$ and $C=A(W-V)$. We have $a_i,b_i\geq0$ while $c_i\leq0$ for $i=1,...,m$.

 Since by assumption one has $0=(W+V)^TA(W-V)$, we can write $0=\sum_{i=1}^ma_ic_i$ and then either $a_i=0$ or $c_i=0$ for each $i$; if $a_i=0$ for $i=1,...,m$ then $V=-W$. Suppose instead that there is an $i_0$ such that $a_{i_0}>0$; we have $0=c_{i_0}=\sum_{i=1}^ma_{i_0j}b_j$ and then $b_j=0$ for $j=1,...,m$. This implies $V=W$.
\end{proof}

We now suppose that $\xi$ is either $\xi_{\text{can}}$ or $\overline\xi_{\text{can}}$ and obstruct the existence of rational homology ball symplectic fillings.

\begin{prop}
 \label{prop:canonical}
 Let $Y=\Sigma(a_1,...,a_n)$ be a canonically oriented Brieskorn sphere, different from $\Sigma(3,4,5),$ $\Sigma(2,5,7)$ and $\Sigma(2,3,6k+1)$ for $k\geq1$, and $\xi_{\emph{can}}$ be the Milnor fillable structure on $Y$. Then $(Y,\xi_{\emph{can}})$ has no rational homology ball symplectic filling.
\end{prop}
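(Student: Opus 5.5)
We argue by contradiction: suppose $(W,\omega)$ is a rational homology ball symplectic filling of $(Y,\xi_{\text{can}})$. The plan is to show that $d_3(\xi_{\text{can}})$ is strictly smaller than $d_3$ of any other fillable structure, that a rational homology ball filling forces $d_3(\xi_{\text{can}})=d(Y)=0$, and that these together force $Y$ to carry at most two fillable structures; Proposition \ref{prop:two} then rules this out outside the excluded list.

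\emph{Heegaard Floer constraints.} Since $W$ is a rational homology ball, $c_1(W)^2=0$ and $\sigma(W)=\chi(W)-1=0$. The usual formula for a filling then gives $d_3(\xi_{\text{can}})=\frac{c_1^2-3\sigma-2\chi}{4}+\dots=0$ (in the normalisation where the standard $S^3$ has the value compatible with $d$). Moreover, $W$ is a negative semi-definite (indeed rational ball) filling of the integral homology sphere $Y$, so the correction term satisfies $d(Y)=0$. Here we use that for fillable $\xi$ one has $c^+(\xi)\neq 0$ and $d_3(\xi)\le d(Y)$ up to the usual sign and shift. By \cite[Corollary 3]{ImC-e}, we may also assume that the standard graph has $e_0=-1$, although this is needed only in Proposition \ref{prop:two}.

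\emph{Minimality of $d_3(\xi_{\text{can}})$.} By Theorem \ref{teo:CM-negative}, every fillable $\xi$ is a Legendrian surgery on some realisation of the complete blow-down $\Gamma'$ of the standard graph. Hence it is Stein filled by $X_\Gamma$, with $c_1=\mathrm{rot}$ vector $R$, and
\[d_3(\xi)=\tfrac14\bigl(R^TQ_{X_\Gamma}^{-1}R-3\sigma-2\chi\bigr)+\text{const},\]
where $\sigma$ and $\chi$ do not depend on the realisation. The entries satisfy $|r_i|\le |e_i|-2=w_i$, i.e. $R\in\mathcal D_W$. The structure $\xi_{\text{can}}$ corresponds to the extremal realisation, with all stabilisations on one side, so $R=\pm W$. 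Apply Lemma \ref{lemma:EOT} with $Q=Q_{X_\Gamma}$: this matrix is negative definite, its off-diagonal entries are non-negative because $\Gamma'$ is a plumbing tree with edges $+1$, and it is irreducible because $\Gamma'$ is connected. We must check that after the blow-downs the matrix is still an $M$-matrix with $q_{ij}\ge0$. If blow-downs produce off-diagonal entries larger than $1$, they remain non-negative, so we verify this first; it is the one point needing care. The lemma gives $F(R)\ge F(W)$, with equality only for $R=\pm W$. Hence $d_3(\xi)>d_3(\xi_{\text{can}})$ for every fillable $\xi$ other than $\xi_{\text{can}}$ and $\overline\xi_{\text{can}}$, the latter having $R=-W$. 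With the appropriate sign convention the strict inequality goes the right way.

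\emph{Conclusion and main obstacle.} All fillable $\xi$ satisfy $d_3(\xi)\le d(Y)=0=d_3(\xi_{\text{can}})$, while any third fillable structure would have $d_3>d_3(\xi_{\text{can}})=0$, which is a contradiction. Thus $Y$ has at most two fillable structures and $d(Y)=0$. The case of a unique structure is $\Sigma(2,3,5)$, which has $d\neq0$. Proposition \ref{prop:two} then forces $Y$ to be one of the excluded manifolds, contradicting the hypothesis. The main obstacle is the sign bookkeeping: we must check that the minimum of $F$ in Lemma \ref{lemma:EOT} indeed corresponds to the minimal $d_3$, and that the inequality between $d_3$ of fillable structures and $d$ points the right way. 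We also need that distinct rotation vectors give non-isotopic structures, which holds via Lisca--Matić.
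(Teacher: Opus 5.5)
Your route is the paper's: $d_3(\xi_{\text{can}})=d(Y)=0$, minimality of $d_3(\xi_{\text{can}})$ via Lemma \ref{lemma:EOT} on $X_\Gamma$, the bound $d_3\le d(Y)$, and then Proposition \ref{prop:two}. The gap is in passing from ``every fillable structure has rotation vector $R=\pm W$'' to ``$Y$ carries at most two fillable structures''. This is your sentence asserting $d_3(\xi)>d_3(\xi_{\text{can}})$ for every fillable $\xi$ other than $\xi_{\text{can}}$ and $\overline\xi_{\text{can}}$.

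For that step you need a fillable structure to be \emph{determined} by its rotation vector, i.e.\ by the $\Spin^c$ structure its Stein structure induces on $X_\Gamma$. Concretely, two Legendrian realisations of the blown-down link with the same $R$ must give isotopic contact structures. Lisca--Mati\'c gives the converse (different $c_1$ forces non-isotopic structures). That yields lower bounds on the number of structures and does not help here. Nor is the needed direction formal. The Legendrian realisation of the link is not fixed by the classical invariants of its components, since Eliashberg--Fraser and Etnyre--Honda only classify components individually. And there is no general principle that Stein structures on one manifold with equal $c_1$ induce isotopic boundary contact structures. The paper takes exactly this from \cite[Proposition 5.1]{CM-negative}: fillable structures on a negative-definite Seifert fibred space are distinguished by the induced $\Spin^c$ structure on $X_\Gamma$. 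That is the missing ingredient. The sign bookkeeping you single out as the main obstacle is routine by comparison.

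Two smaller points:
\begin{itemize}
\item $d_3(\xi)\le d(Y)$ does not follow in general from $c^+(\xi)\neq0$, because non-zero classes of $HF^+(-Y)$ can sit in reduced Floer homology below the bottom of the tower. Here it holds for a specific reason, as in the paper. Every fillable structure is Stein filled by the negative-definite $X_\Gamma$, so the Ozsv\'ath--Szab\'o inequality $c_1^2+b_2\le 4d$ applies; equivalently, use \cite[Proposition 2.2]{CM-negative}.
\item The bound $|r_i|\le|e_i|-2$ is only right for unknotted components. In general it is $|r_i|\le -q_{ii}+\text{TB}_{\xist}(\mathcal K_i)-1$, which matters when copies of $T_{d_2,d_1}$ appear.
\end{itemize}
The hypotheses of Lemma \ref{lemma:EOT} are easy to check. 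Blowing down a $-1$ vertex $u$ replaces $q_{ij}$ by $q_{ij}+q_{iu}q_{ju}\ge 0$ and joins all neighbours of $u$, so non-negativity and irreducibility persist.
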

\begin{proof}
 Since $Y$ bounds a rational homology ball carrying a symplectic structure, its invariant $d_3(\xi)$ and correction term $d=d(Y)$ both vanish.
 
 The result follows by showing that the only Brieskorn spheres which admit exactly two fillable contact structures, and have vanishing correction term and $d_3$-invariant, are $\Sigma(3,4,5),$ $\Sigma(2,5,7)$ and $\Sigma(2,3,6k+1)$ for $k\geq1$, as stated in Proposition \ref{prop:two}. In order to argue that $Y$ cannot have a unique structure, up to isotopy, we can look at \cite[Proposition 1.5]{CM-negative}: in fact, the structure $\xi$ would be self-conjugate, and then $d(Y)>0$ which is a contradiction; we also know from \cite[Theorem 1.9]{CM} that (with the canonical orientation) the only such manifold is $\Sigma(2,3,5)$, and its correction term is 2.

 To see that there are exactly two fillable structures on $Y$, we assume that another $\xi'$ exists. Since the 4-manifold $X_\Gamma$ defined above admits Stein structures $J$ and $J'$ inducing both $\xi$ and $\xi'$ from Theorem \ref{teo:CM-negative}, and they are determined by the rotation numbers of the Legendrian knots $\mathcal K_1,...,\mathcal K_m$ in the surgery presentation coming from blowing down $\Gamma$, we have that \[4d_3(\xi')-m=c_1^2(J')[X_\Gamma]=V^TQ_{X_\Gamma}^{-1}V\] for a certain characteristic vector $V$ whose coordinates satisfy $|v_i|\leq -q_{ii}+\text{TB}_{\xist}(\mathcal K_i)-1$ for $1\leq i\leq m$, and where the $q_{ii}$'s are the diagonal entries of $Q_{X_\Gamma}$. Then one has \[0=d_3(\xi)=d_3(\overline\xi_{\text{can}})\leq d_3(\xi')\leq d(Y)=0\:;\] the second inequality comes from the fact that, in a manifold presented by a negative-definite star-shaped graph, the correction term is the maximal Maslov grading of any non-zero homogeneous class in $\widehat{HF}$, see \cite[Proposition 2.2]{CM-negative}, while the first one comes from the existence part of Lemma \ref{lemma:EOT}. Hence, it follows from the uniqueness part of Lemma \ref{lemma:EOT} that $V=\pm V_{\text{can}}$.
 Note that we can apply Lemma \ref{lemma:EOT} because $Q_{X_\Gamma}$ is irreducible, as the link obtained by blowing down $\Gamma$ is non-split and positive.
 
 Together with \cite[Proposition 5.1]{CM-negative}, this tells us that there is no fillable structure on $Y$ other than $\xi_{\text{can}}$ and $\overline\xi_{\text{can}}$. In fact, the fillable structures on a negative-definite Seifert fibred space are distinguished by the $\Spin^c$-structure on $X_\Gamma$ (which is determined by the characteristic vector $V$) induced by the corresponding Stein structure.   
\end{proof}

Before proving our main results, we need an additional lemma.

\begin{lemma}
 \label{lemma:half}
 If $M=M(e_0;r_1,...,r_n)$ carries a zero-twisting tight contact structure without half convex Giroux torsion then $e_0\geq-1$.   
\end{lemma}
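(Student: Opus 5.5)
Suppose $\xi$ is a zero-twisting tight structure on $M=M(e_0;r_1,\dots,r_n)$ with no half convex Giroux torsion, and suppose for contradiction that $e_0\leq-2$. Zero-twisting gives a Legendrian regular fibre $L$ with twisting number $\tw(L)=0$. Remove standard neighbourhoods $V_1,\dots,V_n$ of the singular fibres with Legendrian rulings, and let $\Sigma\times S^1$ be the complement, where $\Sigma$ is a pair of pants when $n=3$ and an $n$-holed sphere in general. Since $\tw=0$ is attained, we can make every boundary torus $\partial V_i$ convex with dividing curves isotopic to the regular fibre, that is, of vertical slope. Pass to an annulus between two boundary tori, or use a vertical convex surface $\Sigma'\times S^1$ separating the tori. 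By the Imbalance Principle, or because the twisting cannot be increased any further, we may assume the dividing curves on these vertical annuli are all vertical arcs or closed curves parallel to the fibre.

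The key step is to relate $e_0$ to the slopes measured in the coordinates of $\Sigma\times S^1$. We normalise so that $\partial V_i$ has meridian slope $-1/r_i$ with $0<r_i<1$ for $i\geq2$, and put the Euler number $e_0$ on $\partial V_1$. A zero-twisting fibre then forces the vertical dividing slope on every $\partial V_i$, and each $V_i$ is a solid torus whose boundary slope differs from the meridian. Gluing back amounts to a Dehn filling, and it yields a consistent tight structure on $\Sigma\times S^1$ with vertical dividing curves on every boundary component. Such structures are classified: horizontal annuli between consecutive boundary components carry only boundary-parallel arcs or arcs running across. Counting rotations, each $-1$ in $e_0$ beyond the first contributes a region between two boundary tori that is a $T^2\times I$ layer with vertical dividing curves on both ends, rotating by a full $\pi$ in the horizontal direction. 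That layer is exactly a half convex Giroux torsion layer.

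Concretely, I would glue the neighbourhoods in sequence, taking two of them whose combined framing is $\leq -2$, and show that the convex torus separating them from the rest bounds, together with a parallel copy, a $T^2\times[0,1]$. This product has vertical dividing curves on both ends and contains a convex torus with horizontal dividing slope, namely the slope $0$ in the frame where the fibre is $\infty$. Such a torus appears because the framing $e_0\leq -2$ forces the slope of a vertical-annulus dividing curve to pass through $0$ on the way around. That gives half torsion, which is a contradiction. For $e_0\geq -1$, by contrast, the slopes stay within an interval of length less than $\pi$.

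I expect the main obstacle to be rigorously producing the intermediate horizontal convex torus when $e_0\leq-2$. The difficulty is the bookkeeping of slopes under the Seifert invariant normalisation: one must show that the required rotation exceeds $\pi$ whenever $e_0\leq-2$. I would first try the case $n=3$ with the explicit Farey graph path, then induct on $n$ by splitting off one singular fibre at a time using a vertical annulus.
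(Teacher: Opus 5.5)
There is a genuine gap. Your first step, using the zero-twisting fibre to thicken the singular-fibre neighbourhoods until their boundaries have vertical dividing curves, matches the paper's first step. After that, the only step that uses $e_0\leq-2$ is asserted rather than proved, and you flag it yourself as the main obstacle.

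Your setup cannot supply that step. Once every boundary torus has slope $\infty$, the contact structure on the complement $\Sigma\times S^1$ does not depend on the Seifert invariants at all, because the fibre slope is independent of the choice of section. Vertical annuli between such tori carry only closed vertical dividing curves, so they give no information. Hence no argument confined to the complement can see $e_0$; the Euler number only enters through the meridian slopes of the solid tori you glue back. Your claims that ``each $-1$ in $e_0$ beyond the first'' yields a $\pi$-rotating layer, and that $e_0\leq-2$ ``forces the slope of a vertical-annulus dividing curve to pass through $0$'', therefore have no mechanism behind them.

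The missing idea, which is how the paper argues, is to look inside the solid tori. Since $e_0\leq-2$, each thickened neighbourhood $\widetilde NK_i$ (boundary slope $\infty$) contains a convex torus $\partial NK_i$ of integral slope: $-1$ for $i<n$ and $e_0+1$ for $i=n$. The hypothesis is exactly what places $e_0+1$ between $\infty$ and the meridian slope of the last solid torus. Each layer $\widetilde NK_i\setminus NK_i$ is then a single basic slice, so it carries a sign.

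Now order the legs so that the $m$ slices with sign opposite to the last one come first. Join the tori within each sign class by vertical annuli avoiding the zero-twisting fibre, and merge same-sign basic slices using Lemma 4.13 of Ghiggini--Sch\"onenberger. The slope count is $-\sum_{i\leq m}p_i+(m-1)=\sum_{i>m}p_i-(n-m-1)+e_0=-1$, with $p_i=1$ for $i<n$ and $p_n=-(e_0+1)$. It produces two neighbouring layers with slopes $-1$ and $\infty$, one from each sign class, glued along the $\infty$ torus. Together they rotate by $\pi$. The result is overtwisted when $m=0,1,n-1$, and otherwise it is a half convex torsion layer along a separating torus.

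The signs of the basic slices are essential: they decide between these two outcomes. They also dictate a decomposition of the legs by sign, not the leg-by-leg induction on $n$ you propose.
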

\begin{proof}
 We show that if a contact structure $\xi$ on $M$ with $e_0\leq -2$ is zero-twisting, it must have some half torsion. Assuming that there is a vertical fibre $K$ with twisting zero, we can thicken the standard neighbourhoods $\nu K_i$ of singular fibres, up to $\widetilde NK_i$ with $M\backslash (\cup_i\widetilde NK_i)$ having $\infty$ boundary slopes. Furthermore, since $e_0\leq -2$ we know that $M$ admits $-1$-twisting structure and in particular, there are neighbourhoods $N K_i$ such that $\nu K_i\subset N K_i\subset \widetilde NK_i$ and $M\backslash (\cup_i NK_i)$ has boundary slopes $-p_i=-1$ for $i=1,...,n-1$ and $-p_n=e_0+1$ for $i=n$. We notice that for all singular fibres the layer $\widetilde NK_i\backslash NK_i$ consists of a single basic slice and we enumerate legs so that the ones with sign opposite to the one on the last leg come first, say there are $m$ of them. We join $-\partial(M\backslash NK_i)$ for $i$ with positive, respectively negative, sign by vertical annuli avoiding zero-twisting fibre. Because of \cite[Lemma 4.13]{GSch} we obtain this way two neighbouring thickened tori with slopes $-\sum_{i=1}^m p_i+(m-1)=\sum_{i=m+1}^n p_i-(n-m-1)+e_0=-1$ and $\infty$, attached along $\infty$-slope, which form a half convex torsion layer. Therefore, the structure is either overtwisted when $m=0,1, n-1$, or it has half torsion along separating torus.   
\end{proof}

\begin{proof}[Proof of Theorems \ref{teo:canonical1} and \ref{teo:canonical2}]
 The second part, when $\xi$ is either $\xi_\text{can}$ or $\overline\xi_\text{can}$, is done in Proposition \ref{prop:canonical}. 
 
 For the first part, that is the case of $-Y$, we need Proposition \ref{prop:CM-indefinite}. Since there should be a structure $\eta$ such that $(-Y,\eta)$ has a rational homology ball symplectic filling, we have $d_3(\eta)=d(Y)=0$; hence, the invariant $c^+(\eta)\in HF^+(Y)$ is non-zero and has even parity, that is $M(c^+(\eta))\equiv d(Y)\text{ mod }2$. From Proposition \ref{prop:CM-indefinite} it follows that if $\eta$ is negative-twisting then $c^+(\eta)\in HF_\text{red}(Y,\s_\eta):=HF^+(Y,\s_\eta)/\mathcal T$ is non-zero, where $\mathcal T$ is the only subgroup of $HF^+(Y,\s_\eta)$ isomorphic to $\F[U,U^{-1}]/U\cdot\F[U,U^{-1}]$. Our \cite[Proposition 2.2]{CM-negative} combined with a result of Ozsv\'ath and Szab\'o \cite[Corollary 1.4]{OSz-fullpath} gives that the subgroup spanned by the non-zero classes of $\widehat{HF}(Y)$ with even parity is the image of $\psi_*:HF^-(Y)\rightarrow\widehat{HF}(Y)$; in particular, this implies that $\mathcal T$ consists of all the classes of $HF^+(Y)$ with even Maslov grading. In this way we have proved that $d_3(\eta)=-M(c^+(\eta))$ is necessarily odd, which gives a contradiction; thus $\eta$ should be zero-twisting.

 We know from Lemma \ref{lemma:half} that $-Y$ can have a zero twisting tight structure, without half convex Giroux torsion, only when its standard graph $\Gamma^*$, the dual of $\Gamma$, has $e_0\geq -1$. Note that we cannot have $e_0\geq0$ because $Y$ would be an $L$-space, implying $-Y=-\Sigma(2,3,5)$ which has no tight structure \cite{EH}. Using \cite[Corollary 3]{ImC-e}, we have that the (negative-definite) standard graph $\Gamma$ of $Y$ has also $e_0=-1$ which means $-1=-(-1)-n$, and thus $n=2$. This concludes the proof as $\Gamma$ has at least three legs: only lens spaces have a standard graph with two legs. 
\end{proof}

We now show that for many negative-definite Seifert fibred spaces as $M=M(e_0;r_1,...,r_n)$, the manifold $-M$ is not the boundary of a pseudo-convex domain in $\C^2$. Not every case can be obstructed as it is clear from Figure \ref{Pseudo} that there are examples of such manifolds; hence, we are going to show that these are the only ones.

\begin{figure}[ht]
 \centering
  \def\svgwidth{0.5\textwidth}
\begingroup%
  \makeatletter%
  \providecommand\color[2][]{%
    \errmessage{(Inkscape) Color is used for the text in Inkscape, but the package 'color.sty' is not loaded}%
    \renewcommand\color[2][]{}%
  }%
  \providecommand\transparent[1]{%
    \errmessage{(Inkscape) Transparency is used (non-zero) for the text in Inkscape, but the package 'transparent.sty' is not loaded}%
    \renewcommand\transparent[1]{}%
  }%
  \providecommand\rotatebox[2]{#2}%
  \newcommand*\fsize{\dimexpr\f@size pt\relax}%
  \newcommand*\lineheight[1]{\fontsize{\fsize}{#1\fsize}\selectfont}%
  \ifx\svgwidth\undefined%
    \setlength{\unitlength}{1505.7328448bp}%
    \ifx\svgscale\undefined%
      \relax%
    \else%
      \setlength{\unitlength}{\unitlength * \real{\svgscale}}%
    \fi%
  \else%
    \setlength{\unitlength}{\svgwidth}%
  \fi%
  \global\let\svgwidth\undefined%
  \global\let\svgscale\undefined%
  \makeatother%
  \begin{picture}(1,0.22109497)%
    \lineheight{1}%
    \setlength\tabcolsep{0pt}%
    \put(0,0){\includegraphics[width=\unitlength,page=1]{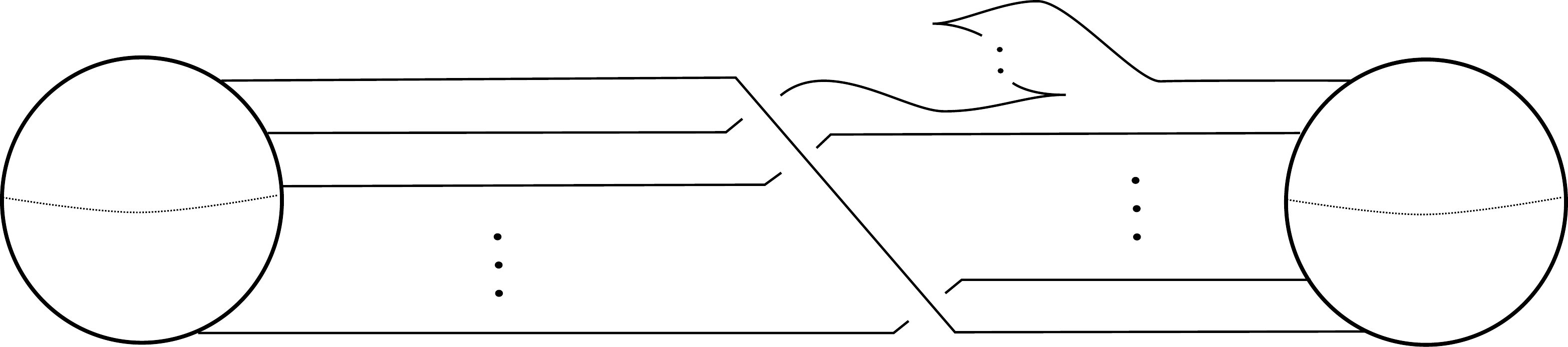}}%
    \put(0.34120722,0.20551395){\color[rgb]{0,0,0}\makebox(0,0)[lt]{\lineheight{1.25}\smash{\begin{tabular}[t]{l}$a-2\text{ times}$\end{tabular}}}}%
  \end{picture}%
\endgroup%

 \caption{\smaller[1]{A pseudo-convex domain in $\C^2$ whose boundary is $-M=M(-1;\frac{1}{a},1-\frac{1}{a},\frac{1}{a})\simeq S^3_{0,0}(T_{2,2a})$ for any integer $a\geq2$. The Legendrian knot where we attach the Stein 2-handle has $a$ strands, and its $\tb$-number is equal to 1.}}
 \label{Pseudo}
 \end{figure}

\begin{proof}[Proof of Theorem \ref{teo:opposite}]
 Let $-M$ be a Seifert fibred space whose standard graph $\Gamma^*$ is indefinite; in other words, whose plumbing has $b_2^+(P_{\Gamma^*})=1$. In the same way as in the proofs of Theorems \ref{teo:canonical1} and \ref{teo:canonical2} above, one has $d_3(\eta)=d(M,\s_\eta)=0$, and again $c^+(\eta)$ has even parity and then $\eta$ is zero-twisting. Now using \cite[Theorem 1.1]{ImC-embedding}, the standard graph $\Gamma$ of $M$ has $e_0\geq-\frac{n+1}{2}$; moreover, again from Lemma \ref{lemma:half} zero-twisting structures without half convex torsion on $-M$ can exist only when $\Gamma^*$ has $e_0\geq-1$. Combining these two facts, we obtain $-\frac{n+1}{2}\leq e_0\leq1-n$ which means $n\leq3$.

 From \cite[Theorem 1.1]{ImC-embedding} we also know that $M$ should be as in Equation \eqref{eq:Duncan}, because $M$ should bound a rational homology ball that smoothly embeds in $S^4$. Together with what we said above we obtain that $-M=M(-1;\frac{1}{a},1-\frac{1}{a},\frac{1}{a})$ for an integer $a\geq2$. All of these Seifert fibred spaces are the boundary of a pseudo-convex domain in $\C^2$ as shown in Figure \ref{Pseudo}.
\end{proof}

\section{Proof of Proposition \ref{prop:two}}
We prove that the only canonically oriented Brieskorn spheres $Y=\Sigma(a_1,...,a_n)$ with $d(Y)=0$, admitting exactly two symplectically fillable contact structures, are $\Sigma(3,4,5),$ $\Sigma(2,5,7)$ and the family $\Sigma(2,3,6k+1)$ for $k\geq1$, see Figure \ref{Family3}. Note that many of these manifolds bound smooth rational homology balls \cite{Savk}; moreover, by Theorem \ref{teo:CM-negative} the manifold $Y$ always has at least one fillable structure; namely, the structure $\xi_\text{can}$.

   \begin{figure}[ht] 
     \begin{tikzpicture}[scale=0.7]
    \tkzDefPoints{0/0/A, 1.5/1/B, 1.5/-1/D, 3/-1/E, 1.5/0/C}  
    \tkzDrawSegment(A,B)\tkzDrawSegment(A,D)\tkzDrawSegment(E,D)\tkzDrawSegment(B,A)\tkzDrawSegment(A,C)
    \tkzDrawPoints[fill,black,size=5](A,B,C,D,E)
     \tkzLabelPoint[above left](A){$-1$} 
     \tkzLabelPoint[above left](B){$-3$}\tkzLabelPoint[right](C){$-4$}
     \tkzLabelPoint[below left](D){$-3$}\tkzLabelPoint[below left](E){$-2$}
       \end{tikzpicture}\hspace{1cm} 
       \begin{tikzpicture}[scale=0.7]
    \tkzDefPoints{0/0/A, 1.5/1/B, 1.5/-1/D, 3/-1/E, 1.5/0/C}  
    \tkzDrawSegment(A,B)\tkzDrawSegment(A,D)\tkzDrawSegment(E,D)\tkzDrawSegment(B,A)\tkzDrawSegment(A,C)
    \tkzDrawPoints[fill,black,size=5](A,B,C,D,E)
     \tkzLabelPoint[above left](A){$-1$} 
     \tkzLabelPoint[above left](B){$-2$}\tkzLabelPoint[right](C){$-5$}\tkzLabelPoint[below left](D){$-4$}\tkzLabelPoint[below left](E){$-2$}
       \end{tikzpicture}\hspace{1cm}
     \begin{tikzpicture}[scale=0.7]
    \tkzDefPoints{0/0/A, 1.5/1/B, 1.5/-1/D, 3/-1/E, 5/-1/F, 1.5/0/C}  
    \tkzDefPoints{3.5/-1/X, 4.5/-1/Y}\tkzDefPoints{3.9/-1/P, 4/-1/Q, 4.1/-1/R}
    \tkzDrawPoints[fill,black,size=1](P,Q,R)
    \tkzDrawSegment(A,B)\tkzDrawSegment(A,D)\tkzDrawSegment(E,D)\tkzDrawSegment(E,X)\tkzDrawSegment(Y,F)\tkzDrawSegment(A,C)
    \tkzDrawPoints[fill,black,size=5](A,B,C,D,E,F)
     \tkzLabelPoint[above left](A){$-1$} \tkzLabelPoint[above left](B){$-2$}\tkzLabelPoint[right](C){$-3$}
     \tkzLabelPoint[below left](D){$-7$}\tkzLabelPoint[below left](E){$-2$}\tkzLabelPoint[below left](F){$-2$}
\end{tikzpicture}
     \caption{\smaller[1]{The standard graph of $\Sigma(3,4,5)$ (left), $\Sigma(2,5,7)$ (middle) and $\Sigma(2,3,6k+1)$ with $k\geq1$ (right). There are $k-1$ vertices with framing $-2$ on the third leg of the graph of $\Sigma(2,3,6k+1)$.}}
     \label{Family3}
\end{figure} 

We reason as follows: the manifolds that we are looking for are precisely the ones such that the surgery presentation (obtained by blowing down the standard graph) has every knot $K$ in it appearing with framing $\text{TB}_{\xist}(K)-1$, except one with framing $\text{TB}_{\xist}(K)-2$; these numbers mean that all the Legendrian knots are not stabilised, except one that is stabilised once. We recall that every knot in the surgery presentation is an unknot, except when the blown down subgraph $\Gamma'\subset\Gamma$ represents the Seifert fibration of the torus knot $T_{d_2,d_1}$, where $1<d_2<d_1$ coprime. 

We recall that $e(Y):=r_1+\cdots+r_n+e_0$ where the $r_i$'s are the normalised coefficients of $Y$, and $e(Y)<0$ if and only if $Y$ is canonically oriented, see \cite{Saveliev}; in addition, we have the following result from \cite[Section 4]{CM-negative}. 

\begin{teo}[Cavallo-Matkovi\v c]
 \label{teo:tw}
 If $M=M(e_0;r_1,\dots,r_n)$ has negative-definite standard graph, then all negative-twisting tight contact structures on $M$ have the same maximal twisting number $\emph{tw}(M)$. Furthermore, the integer $\emph{tw}(M)$ is equal to $-d_1-d_2$ when the subgraph $\Gamma'$ represents the Seifert fibration of $T_{d_2,d_1}$ with $1\leq d_2\leq d_1$ coprime, or to $-1$ when $\Gamma'$ is empty, see Figure \ref{Subgraph}.
\end{teo}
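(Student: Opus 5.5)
The plan is to prove the two inequalities $\text{tw}(\xi)\geq \text{tw}(M)$ and $\text{tw}(\xi)\leq \text{tw}(M)$ separately, for every negative-twisting tight $\xi$ on $M$. Here twisting is always measured relative to the framing of a regular fibre induced by the Seifert fibration.

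\emph{Step 1: the model computation.} Blow down the standard graph $\Gamma$ completely to get the surgery link $\mathcal K_1,\dots,\mathcal K_m$ used in Theorem \ref{teo:CM-negative}, and locate a regular fibre $F$ in this picture.
\begin{itemize}
\item If $\Gamma'$ is empty, $F$ can be taken to be a meridian of one of the unknotted components. Its Legendrian realisation as a $\text{tb}=-1$ meridian shows that twisting $-1$ is attained.
\item If $\Gamma'$ represents the Seifert fibration of $T_{d_2,d_1}$, then $F$ is isotopic to a parallel copy of the torus knot $T_{d_2,d_1}$. Its fibration framing is $d_1d_2$, whereas $\text{TB}_{\xist}(T_{d_2,d_1})=d_1d_2-d_1-d_2$. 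The Legendrian push-off therefore realises twisting $-d_1-d_2$.
\end{itemize}
This gives the expected value for the fillable structures.

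\emph{Step 2: lower bound for an arbitrary negative-twisting $\xi$.} I would use the convex-surface decomposition of $\xi$. Take a Legendrian vertical fibre of maximal twisting $t<0$, then thicken the neighbourhoods of the singular fibres until the boundary slopes of the complement $\Sigma\times S^1$ are determined by $t$. Now suppose $t<\text{tw}(M)$. Using the imbalance principle, I would find a bypass along a vertical annulus joining two singular-fibre neighbourhoods. This bypass lets us destabilise the vertical fibre, contradicting maximality. The numerical input for the bypass is that the normalised coefficients cannot all ``absorb'' the slope: for $\Gamma'$ empty this is automatic down to $t=-1$. For $\Gamma'$ nonempty, the obstruction to continuing occurs exactly at $t=-d_1-d_2$, where the legs forming $\Gamma'$ fill up the slopes of the torus-knot complement.

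\emph{Step 3: upper bound.} Suppose some $\xi$ had a vertical Legendrian with twisting $t>\text{tw}(M)$, still negative. If $\Gamma'$ is empty, there is nothing to prove, since negative-twisting forces $t\leq -1$. Otherwise, I would cut along the torus separating the $\Gamma'$ legs from the rest. On the piece fibred like the complement of $T_{d_2,d_1}$, the resulting slopes would produce a Legendrian representative of $T_{d_2,d_1}$ (inside a tight, hence by the gluing analysis Stein-fillable-type, neighbourhood) exceeding the Bennequin-type bound $d_1d_2-d_1-d_2$. Alternatively, they would produce an overtwisted disc via a bypass, using $e(M)<0$ to rule out the borderline case.

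The main obstacle is Step 3 for $\Gamma'\neq\emptyset$. Transferring the maximal Thurston--Bennequin bound of the torus knot from $S^3$ to an arbitrary tight structure on $M$ requires showing that the relevant subpiece embeds contactomorphically into $(S^3,\xist)$, or carrying out the bypass bookkeeping on slopes directly. I would try the direct slope computation first, using the continued fractions of the legs in $\Gamma'$.
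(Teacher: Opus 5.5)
There is a genuine gap. Steps 2 and 3 carry the whole content of the theorem, and both are announced rather than proved. The route you propose for Step 3 also cannot work as stated. The paper gives no proof of this statement; it quotes it from \cite[Section 4]{CM-negative}, so your outline has to stand on its own.

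\textbf{Step 3: the upper bound is not local to $\Gamma'$.} Take $\Gamma'$ to be just the central $-1$ vertex, so that $\text{tw}(M)=-2$ (the $T_{1,1}$ case).
\begin{itemize}
\item There are no $\Gamma'$-legs. The piece your Step 3 isolates is a fibred solid torus, the complement of a Hopf fibre.
\item The standard neighbourhood of a vertical Legendrian of twisting $-1$ is a tight contact structure on such a piece.
\item This piece embeds in $(S^3,\xist)$ compatibly with the Hopf fibration (fibre framing $+1$) only if $t\leq -2$, by the Bennequin bound for the unknot.
\end{itemize}
So ``show that the subpiece embeds contactomorphically into $(S^3,\xist)$'' is no easier than the inequality you want to prove, and the same circularity occurs for every $T_{d_2,d_1}$. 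Likewise, a slope computation using only the continued fractions of the $\Gamma'$-legs has nothing to work with in this case.

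The bound $t\leq \text{tw}(M)$ must come from the rest of $M$: the legs outside $\Gamma'$ and the hypotheses on $\Gamma$. One possible route is to show that at twisting $t>\text{tw}(M)$ the decomposition of $M$ along vertical annuli forces an overtwisted disc. Your sketch invokes this global information only for an unspecified ``borderline case''.

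\textbf{Step 2: the bypass mechanism is off.} The imbalance principle, applied to annuli from the maximal fibre to the boundaries $\partial V_i$ of the singular-fibre neighbourhoods, lets you thicken each $V_i$ until $\partial V_i$ carries vertical Legendrian rulings of twisting $t$. After that, every vertical annulus between $\partial V_i$ and $\partial V_j$ meets the dividing sets in $2|t|$ points on each side. It is balanced, so no bypass is forced. You have to split into two cases.
\begin{itemize}
\item If some annulus has a boundary-parallel dividing arc, you get a bypass. But whether the thickened $V_i$ then has vertical twisting $t+1$ depends on the Farey-graph position of the slopes in the coordinates of $V_i$, i.e.\ on the continued fraction of $r_i$.
\item If no annulus has such an arc, you get no bypass from them. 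You must instead cut along the annuli, round edges, and compare the resulting slope with what the remaining $V_k$ and $e_0$ allow.
\end{itemize}
What has to be proved is exactly that this analysis stops at $t=-d_1-d_2$. When $\Gamma'=\emptyset$ it must reach $t=-1$; this uses $e_0\leq -2$ and is not automatic.

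\textbf{Two smaller points in Step 1.} First, the regular fibre is a meridian of the central unknot, not of an arbitrary component. Second, a Legendrian push-off of a surgery component is a parallel with the contact framing, not the fibre framing, so it is not a regular fibre. Instead, realise the torus-knot components and the fibre as $(d_2,d_1)$ Legendrian ruling curves on nested boundaries of standard neighbourhoods of a $\tb=-1$ unknot, stabilising the components as needed. These curves pairwise link $d_1d_2$ times, and the fibre has twisting $-d_1-d_2$.
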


\begin{figure}[ht]
\begin{tikzpicture}[scale=0.7]
    \tkzDefPoints{0/0/A}  \tkzDefPoints{0/-1.9/G} 
    \tkzDrawPoints[fill,black,size=5](A)\tkzDrawPoints[fill,white,size=1](G) 
     \tkzLabelPoint[below left](A){$-1$} 
 \end{tikzpicture}\hspace{1cm} 
 \begin{tikzpicture}[scale=0.7]
    \tkzDefPoints{1.5/-1/D, 3/-1/E, 5/-1/F} \tkzDefPoints{0/-2.87/G} 
    \tkzDefPoints{3.5/-1/X, 4.5/-1/Y}\tkzDefPoints{3.9/-1/P, 4/-1/Q, 4.1/-1/R}
    \tkzDrawPoints[fill,black,size=1](P,Q,R)\tkzDrawPoints[fill,white,size=1](G)
    \tkzDrawSegment(E,D)\tkzDrawSegment(E,X)\tkzDrawSegment(Y,F)
    \tkzDrawPoints[fill,black,size=5](D,E,F)
     \tkzLabelPoint[below left](D){$-1$}\tkzLabelPoint[below left](E){$-2$}\tkzLabelPoint[below left](F){$-2$}
\end{tikzpicture}\hspace{1.5cm}
\begin{tikzpicture}[scale=0.7]
    \tkzDefPoints{0/0/A, 1.5/1/B, 1.5/-1/D, 3/-1/E, 5/-1/F, 3/1/H, 5/1/I}  
    \tkzDefPoints{3.5/-1/X, 4.5/-1/Y}\tkzDefPoints{3.9/-1/P, 4/-1/Q, 4.1/-1/R}\tkzDefPoints{3.5/1/X', 4.5/1/Y'}\tkzDefPoints{3.9/1/P', 4/1/Q', 4.1/1/R'}
    \tkzDrawPoints[fill,black,size=1](P,Q,R)\tkzDrawPoints[fill,black,size=1](P',Q',R')
    \tkzDrawSegment(A,B)\tkzDrawSegment(A,D)\tkzDrawSegment(E,D)\tkzDrawSegment(E,X)\tkzDrawSegment(Y,F)\tkzDrawSegment(B,H)
    \tkzDrawSegment(H,X')\tkzDrawSegment(Y',I)
    \tkzDrawPoints[fill,black,size=5](A,B,D,E,F,H,I)
     \tkzLabelPoint[above left](A){$-1$} \tkzLabelPoint[above right](B){$m_1^1$}\tkzLabelPoint[above right](H){$m_2^1$}\tkzLabelPoint[above right](I){$m_{k_1}^1$}
     \tkzLabelPoint[below right](D){$m_1^2$}\tkzLabelPoint[below right](E){$m_2^2$}\tkzLabelPoint[below right](F){$m_{k_2}^2$}
\end{tikzpicture}
     \caption{\smaller[1]{The three different non-empty types for the subgraph $\Gamma'\subset\Gamma$: only the centre (left), corresponding to the fibration of $T_{1,1}$, the centre joint with the first $d_1-1$ vertices of one leg (middle), corresponding to the fibration of $T_{1,d_1}$, and the standard graph of $M(-1;\frac{b_1}{d_1},\frac{b_2}{d_2})$ where $b_i<d_i$ is the only positive integer such that $\frac{b_1}{d_1}+\frac{b_2}{d_2}=1-\frac{1}{d_1d_2}$ (right), corresponding to the fibration of $T_{d_2,d_1}$.}}
     \label{Subgraph}
\end{figure}

Since $d(Y)=0$ implies $e_0=-1$ by a result of Issa and McCoy \cite[Corollary 3]{ImC-e}, and thus the blown down subgraph $\Gamma'$ cannot be empty, we can distinguish three sub-cases:
  \begin{itemize}[leftmargin=0.5cm]
    \item $\text{tw}(Y)=-2$ \\
     We have that $e_0$ is $-1$ and it coincides with the subgraph $\Gamma'$; there is a single blow-down which increases by one the framing of every vertex connected to $e_0$. All the knots in the resulting surgery presentation are unknots. If $n>3$ then $e(Y)\geq (n-1)\cdot\frac{1}{3}+\frac{1}{4}-1>0$, thus $\Gamma$ would be indefinite; hence, we can assume that $n=3$. In addition, we observe that if there were no vertex with framing $-4$ connected to $e_0$ then $e(Y)\geq3\cdot\frac{1}{3}-1\geq0$, and again this is a contradiction. We have shown that $Y=M(-1;\frac{a+1}{2a+3},\frac{2b+1}{3b+4},\frac{c+1}{2c+3})$ where $a,b,c\geq0$ are the numbers of $-2$-vertices in each leg. 
     
     Now, if $a,c\geq1$ then $e(Y)\geq\frac{2}{5}+\frac{1}{4}+\frac{2}{5}-1=\frac{1}{20}>0$, thus $a=0$. If $c\geq2$ then $e(Y)\geq\frac{1}{3}+\frac{1}{4}+\frac{3}{7}-1=\frac{1}{84}>0$, thus $c=1$ because denominators need to be coprime ($Y$ is a Brieskorn sphere). Finally, if $b\geq1$ then $e(Y)\geq\frac{1}{3}+\frac{2b+1}{3b+4}+\frac{2}{5}-1=\frac{3b-1}{45b+60}>0$, implying $b=0$. This leaves $\Sigma(3,4,5)$ as the only possibility for $Y$, see Figure \ref{Family3} (left). \\

    \item $\text{tw}(Y)\leq-3$ and $d_2=1$ \\
     We have that $e_0$ is $-1$ and the subgraph $\Gamma'$ consists of $e_0$ and the initial part of a leg (say the first one) made by a sequence of $a=d_1-1\geq1$ vertices with framing $-2$. After the blow-downs, the framing of every vertex connected to $e_0$ (except the one on the first leg, which disappears) increases by $a+1$. All the knots in the resulting surgery presentation are unknots. If $n>3$ then \[e(Y)\geq\dfrac{a}{a+1}+(n-2)\cdot\dfrac{1}{a+3}+\dfrac{1}{a+4}-1>0\:,\] thus $\Gamma$ would be indefinite; hence, we can assume that $n=3$. In addition, we observe that if there were no vertex with framing $-a-4$ connected to $e_0$ then \[e(Y)\geq\dfrac{a}{a+1}+2\cdot\dfrac{1}{a+3}-1=\dfrac{a-1}{(a+1)(a+3)}\geq0\:,\] and again this is a contradiction. A third inequality can now be written when $a\geq2$ leading to \[e(Y)\geq\dfrac{a}{a+1}+\dfrac{1}{a+3}+\dfrac{1}{a+4}-1=\dfrac{a^2+2a-5}{(a+1)(a+3)(a+4)}>0\:,\] and forcing $a=1$. If the length of the first leg is $a$, this shows that $Y=M(-1;\frac{1}{2},\frac{b+1}{4b+5},\frac{c+1}{3c+4})$ where $b,c\geq0$ are the numbers of $-2$-vertices in the second and third leg. 
     
     As in the previous case, if $c\geq2$ then $e(Y)\geq\frac{1}{2}+\frac{1}{5}+\frac{3}{10}-1=0$, thus $c=1$. Finally, if $b\geq1$ then $e(Y)\geq\frac{1}{2}+\frac{b+1}{4b+5}+\frac{2}{7}-1=\frac{2b-1}{56b+70}>0$, implying $b=0$. This leaves $\Sigma(2,5,7)$ as the only possibility for $Y$, see Figure \ref{Family3} (middle). 
     
     If the length of the first leg were bigger than $a$ then we observe that any additional vertex would have increased $e(Y)$; therefore, we just need to check that $e(Y)>0$ immediately after we add a single vertex with framing $-3$ (this becomes a $-2$ after blowing down $\Gamma$) on the first leg of the graph of $M(-1;\frac{1}{2},\frac{b+1}{4b+5},\frac{c+1}{3c+4})$. \\ 

    \item $\text{tw}(Y)\leq-3$ and $d_2>1$ \\
     We have that $e_0$ is $-1$ and the subgraph $\Gamma'$ consists of $e_0$ and the first two legs, such that $M(-1;r_1,r_2)$ is the Seifert fibration of $T_{d_2,d_1}$. After the blow-downs, the vertices connected to $e_0$ (except the ones in the first two legs) become a cable of $T_{d_2,d_1}$; all the other knots in the resulting surgery presentation are unknots. 
     
     From the proof of \cite[Proposition 5.1]{CM-negative}, the resulting framing on each $T_{d_2,d_1}$ is $\text{TB}_{\xist}(T_{d_2,d_1})-\epsilon=d_1d_2-d_1-d_2-\epsilon$ where $\epsilon=1,2$. The only torus knot for which this number is negative is the positive trefoil $T_{2,3}$; moreover, one has that $\text{TB}_{\xist}(T_{2,3})=1$ and then the standard Legendrian trefoils in the surgery presentation are once stabilised, implying that there can be only one of them.

     Summarising, we have that $M(-1;r_1,r_2)$ represents $T_{2,3}$, which means $r_1=\frac{1}{2}$ and $r_2=\frac{1}{3}$; in addition, there are three legs, and the first vertex on the third leg needs to have the precise framing so that, after the blow-downs, this will become $\text{TB}_{\xist}(T_{2,3})-2=-1$. The remaining of the third leg is untouched by the blow-downs, so all the unknots in it have framing $-2$. This leaves the family $\Sigma(2,3,6k+1)$ with $k\geq1$ as the only possibility, see Figure \ref{Family3} (right). If either of the first two legs had more than one vertex then $e(Y)>0$ which is excluded, as in the previous case. 
   \end{itemize}

\medskip
\end{document}